\DeclareMathOperator{\rad}{rad}
\DeclareMathOperator{\nil}{nil}
\DeclareMathOperator{\Spec}{Spec}
\DeclareMathOperator{\mSpec}{mSpec}
\DeclareMathOperator{\Min}{Min}
\newcommand{\Z}{\mathbb{Z}}
\newcommand{\N}{\mathbb{N}}
\newcommand{\m}{\mathfrak{m}}
\newcommand{\p}{\mathfrak{p}}
\newtheorem{theorem}{Theorem}
\numberwithin{theorem}{section}
\newtheorem{prop}[theorem]{Proposition}
\newtheorem{lemma}[theorem]{Lemma}
\newtheorem{cor}[theorem]{Corollary}
\theoremstyle{definition}
\newtheorem*{mydef}{Definition}
\newtheorem{ex}{Example}
\newtheorem{rem}[theorem]{Remark}
\crefname{question}{question}{questions}
\begin{document}
\title{Surjections of unit groups and semi-inverses}
\author{Justin Chen}

\subjclass[2010]{{16U60, 13A15, 13H99}}

\begin{abstract}
Given a surjective ring homomorphism, we study when the induced group homomorphism 
on unit groups is surjective. To this end, we introduce notions of generalized inverses and 
units, as well as a class of rings such that the set of closed points in the spectrum is a 
closed set. It is shown that any surjection out of such a ring induces a surjection on 
unit groups.
\end{abstract}

\maketitle

\section{Introduction}
Let \textbf{CRing} be the category of commutative rings with $1 \ne 0$, and
\textbf{Ab} the category of abelian groups. One of the most natural
functors from \textbf{CRing} to \textbf{Ab} is the group of units functor, $(\_)^\times$,
associating to any (commutative) ring its (abelian) group of units. Functoriality 
follows from the fact that a ring homomorphism $\varphi : R \to S$ sends $1$ to $1$,
hence units to units, and thus induces (by set-theoretic restriction) a group homomorphism 
$\varphi^\times : R^\times \to S^\times$. By definition as a set-theoretic restriction, one sees 
that $\varphi$ injective implies $\varphi^\times$ injective (i.e., $(\_)^\times$ is ``left exact''). 
The question we now consider is: when does $\varphi$ surjective imply 
$\varphi^\times$ surjective, i.e., how does $(\_)^\times$ fail to be ``right exact''?

\begin{ex}
For any prime number $p$, the natural surjection $\Z \twoheadrightarrow \Z/p\Z$ induces 
a group homomorphism $\Z/2\Z \cong \Z^\times \to (\Z/p\Z)^\times \cong \Z/(p-1)\Z$, which 
is a surjection iff $p = 2, 3$. 
\end{ex}

\begin{ex}
For a field $k$, any ring surjection $\varphi : k \twoheadrightarrow R$ is necessarily injective,
hence an isomorphism, so (by functoriality) $\varphi^\times$ is also an isomorphism.
\end{ex}

\begin{ex}
For a field $k$, the surjection $\varphi_1 : k[x] \twoheadrightarrow k[x]/(x) \cong k$ 
induces a surjection on unit groups, but $\varphi_2 : k[x] \twoheadrightarrow k[x]/(x^2)$ does 
not, as $\varphi_2(1 + x) \in (k[x]/(x^2))^\times$, but is not the image of any unit of $k[x]$ 
($ =$ nonzero constant in $k$).
\end{ex}

With these examples at hand, we make the following (non-vacuous) definition:

\begin{mydef}
A ring surjection $\varphi : R \twoheadrightarrow S$ has $(*)$ if $\varphi^\times : R^\times 
\twoheadrightarrow S^\times$ is surjective. We say that the ring $R$ has $(*)$ if every ring 
surjection $\varphi : R \twoheadrightarrow S$ (for any ring $S$) has $(*)$.
\end{mydef}

If $\varphi : R \twoheadrightarrow S$ is a ring surjection, then $S \cong R/I$ for some $R$-ideal
$I$ (namely $I = \ker \varphi$), so one may instead refer to an ideal $I$ having $(*)$ (i.e. if the 
canonical surjection $R \twoheadrightarrow R/I$ has $(*)$). Thus $R$ has $(*)$ iff $I$ has $(*)$
for every $R$-ideal $I$, so in this way property $(*)$ for a ring becomes an
ideal-theoretic statement. The examples above say that
any field $k$ has $(*)$, while $\Z$ and $k[x]$ do not.

We begin with some characterizations of $(*)$. Recall that if $W$ is a multiplicative set, the
\textit{saturation} of $W$ is defined as $W^\sim := \{r \in R \mid \exists s \in R, sr \in W\}$, 
and $W$ is called \textit{saturated} if $W = W^\sim$.

\begin{prop} \label{firstChar}
Let $R$ be a ring, $I$ an $R$-ideal. The following are equivalent:

\indent i) $I$ has $(*)$

\indent ii) $R^\times + I$ is saturated

\indent iii) $R^\times + I = (1 + I)^\sim$

\indent iv) For any $a \in R$ such that $1 - ab \in I$ for some $b \in R$, there exists
$u \in R^\times$ with $1 - au \in I$.
\end{prop}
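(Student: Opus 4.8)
The plan is to funnel all four conditions through the single set $U := \varphi^{-1}\bigl((R/I)^\times\bigr)$, where $\varphi : R \twoheadrightarrow R/I$ is the quotient map, and to show that each of (i)--(iv) is equivalent to the one equality $R^\times + I = U$. First I would unwind the definitions: $\bar a \in (R/I)^\times$ precisely when $ab \equiv 1 \pmod I$ for some $b$, so $U$ is exactly the set of $a \in R$ that become units mod $I$. Meanwhile $\varphi(R^\times + I) = \varphi(R^\times) = \im \varphi^\times$, so $R^\times + I = \varphi^{-1}(\im \varphi^\times) \subseteq U$, with equality iff $\im \varphi^\times = (R/I)^\times$. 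Since $\varphi$ is surjective, a subset of $R/I$ is determined by its preimage, so (i) holds iff $R^\times + I = U$.

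Next I would compute the two saturations. For (iii), $r \in (1+I)^\sim$ means $sr \in 1 + I$, i.e. $\bar s\,\bar r = \bar 1$, for some $s$; this says exactly $\bar r \in (R/I)^\times$, so $(1+I)^\sim = U$ and (iii) becomes $R^\times + I = U$. For (ii) I would first note $R^\times + I$ is multiplicative (a product $(u_1+i_1)(u_2+i_2)$ lands in $u_1u_2 + I \subseteq R^\times + I$, and $1 \in R^\times + I$), then show $(R^\times + I)^\sim = U$: the inclusion $R^\times + I \subseteq U$ gives $(R^\times+I)^\sim \subseteq U^\sim = U$ (since $U$ is a saturation, hence already saturated), while $1 + I \subseteq R^\times + I$ gives $U = (1+I)^\sim \subseteq (R^\times + I)^\sim$. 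Thus $R^\times + I$ is saturated iff $R^\times + I = (R^\times+I)^\sim = U$, so (ii) is again $R^\times + I = U$. The one structural fact powering the inclusion $(R^\times+I)^\sim \subseteq U$ is that in a commutative ring a factor of a unit is a unit: if $\bar s\,\bar r$ is a unit then so is $\bar r$.

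Finally I would treat (iv) by reading off what its hypothesis and conclusion describe. The hypothesis ``$1 - ab \in I$ for some $b$'' says precisely $a \in U$. The conclusion ``$1 - au \in I$ for some $u \in R^\times$'' says $a \in R^\times + I$: if $au = 1 + j$ with $j \in I$ then $a = u^{-1} + u^{-1}j \in R^\times + I$, and conversely $a = v + i$ with $v \in R^\times$ gives $a v^{-1} = 1 + iv^{-1} \in 1 + I$. Hence (iv) asserts exactly $U \subseteq R^\times + I$, which together with the automatic reverse inclusion is once more $R^\times + I = U$. Assembling the four reductions completes the equivalence.

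The argument is essentially a definition chase, so I do not anticipate a genuine obstacle; the points demanding care are the asymmetry in (iv) --- its hypothesis only furnishes an arbitrary mod-$I$ inverse $b$, whereas its conclusion demands that an inverse be realized by a genuine unit $u$, which is exactly the gap between the ``some $s \in R$'' defining the saturation and the ``some $u \in R^\times$'' defining $R^\times + I$ --- and remembering to invoke surjectivity of $\varphi$ when passing between subsets of $R/I$ and their preimages.
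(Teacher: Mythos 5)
Your proposal is correct and is essentially the paper's own argument: the central identification $(1+I)^\sim = \varphi^{-1}\bigl((R/I)^\times\bigr)$, the containment chain $1 + I \subseteq R^\times + I \subseteq (1+I)^\sim$, and the monotonicity/idempotence of saturation are exactly the ingredients the paper uses. Your only deviation is organizational --- proving each of (i)--(iv) equivalent to the single hub equality $R^\times + I = \varphi^{-1}\bigl((R/I)^\times\bigr)$ instead of chaining implications through (iii) --- which is a harmless repackaging of the same definition chase.
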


\begin{proof}
(ii) $\iff$ (iii): follows from the containment $1 + I \subseteq R^\times + I \subseteq 
(1 + I)^\sim$ which holds for any ideal $I$, and the fact that saturation is a 
closure operation (in particular, is monotonic and idempotent).

(i) $\implies$ (iii): Suppose that the canonical surjection $p : R \twoheadrightarrow R/I$
induces a surjection $p^\times : R^\times \twoheadrightarrow (R/I)^\times$, i.e. if 
$r \in R$ is such that $p(r)$ is a unit, then $p(r) = p(u)$ for some $u \in R^\times$.
Then $r - u \in \ker p = I$, i.e. $r \in R^\times + I$. Thus the preimage
of the units of $R/I$ is contained in $R^\times + I$, but this preimage is exactly 
$(1 + I)^\sim$, since $p(r)$ is a unit $\iff 1 = p(1) = p(r)p(s)$ for some $s \in R \iff 
1 - rs \in I \iff rs \in 1 + I$.

(iii) $\implies$ (i): if $R^\times + I = (1 + I)^\sim$, then any preimage of a unit of $R/I$
differs from a unit of $R$ by an element of $I$, so every unit of $R/I$ is the image of a 
unit of $R$.

(iii) $\iff$ (iv): Notice that $a \in (1 + I)^\sim \iff 1 - ab \in I$ for some $b \in R$,
and $a \in R^\times + I \iff v - a \in I$ for some $v \in R^\times \iff 1 - v^{-1}a \in I$.
\end{proof}

\section{Sufficient conditions for $(*)$}

As a first application of \Cref{firstChar}, one has the following sufficient condition
for an ideal to have $(*)$ (hereafter, the Jacobson radical of $R$ is denoted by 
$\rad(R) := \bigcap_{\m \in \mSpec(R)} \m$, the intersection of all maximal ideals 
of $R$). 

\begin{cor} \label{radCase}
Let $R$ be a ring, $I$ an $R$-ideal. If $I \subseteq \rad(R)$, then $I$ has $(*)$.
\end{cor}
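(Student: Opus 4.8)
The plan is to verify criterion (iv) of \Cref{firstChar} directly, leaning on the standard characterization of the Jacobson radical as the largest ideal $J$ with $1 + J \subseteq R^\times$. Concretely, the single fact I would rely on is that $I \subseteq \rad(R)$ forces $1 + I \subseteq R^\times$: for any $i \in I$, the element $1 + i$ lies outside every maximal ideal (otherwise some $\m \in \mSpec(R)$ would contain both $i \in \rad(R) \subseteq \m$ and $1 + i$, hence $1$), so $1 + i$ is a unit.

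With this in hand the verification of (iv) is short. Suppose $a \in R$ and $1 - ab \in I$ for some $b \in R$. Writing $ab = 1 - i$ with $i \in I$, the element $ab$ lies in $1 + I$ (as $-i \in I$), hence is a unit by the fact above. In a commutative ring a product can only be a unit if each factor is, so $a \in R^\times$. Taking $u := a^{-1} \in R^\times$ then gives $1 - au = 0 \in I$, which is exactly what (iv) demands; \Cref{firstChar} then yields that $I$ has $(*)$.

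As an alternative route one could instead verify (iii): since $1 + I \subseteq R^\times$ and the units form a saturated multiplicative set, $(1 + I)^\sim \subseteq (R^\times)^\sim = R^\times \subseteq R^\times + I$, while the reverse containment $R^\times + I \subseteq (1 + I)^\sim$ holds for any ideal, giving equality. I expect the only real content to be the characterization of $\rad(R)$ via $1 + I \subseteq R^\times$; once that is granted there is no genuine obstacle, since a product being a unit immediately pins $a$ down as a unit and collapses the required congruence to $0 \in I$.
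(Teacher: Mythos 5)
Your proof is correct and is essentially the paper's argument: both reduce to the standard fact that $1 + \rad(R) \subseteq R^\times$ and then invoke \Cref{firstChar}, the paper via criterion (ii) (observing $R^\times + I = R^\times = \{1\}^\sim$ is saturated), you via (iv) (and your alternative via (iii) is likewise a cosmetic variant). The only choice that differs is which of the equivalent conditions of \Cref{firstChar} is checked, so there is no substantive difference in the route.
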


\begin{proof}
If $I \subseteq \rad(R)$, then $R^\times + I = R^\times = \{1\}^\sim$ is saturated, 
so \Cref{firstChar}(ii) applies.
\end{proof}

In fact, rather than requiring $I$ to be contained in every maximal ideal, 
one can allow finitely many exceptions:

\begin{theorem}
Let $R$ be a ring, $I$ an $R$-ideal. If $I$ is contained in all but finitely many 
maximal ideals of $R$ (i.e. $|\mSpec(R) \setminus V(I)| < \infty$), then $I$ 
has $(*)$.
\end{theorem}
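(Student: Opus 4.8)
The plan is to verify criterion (iv) (equivalently (iii)) of \Cref{firstChar}. The idea is geometric: an element $a \in R$ satisfies $1 - ab \in I$ for some $b$ precisely when $\bar{a}$ is a unit in $R/I$, which happens iff $a$ lies outside every maximal ideal containing $I$, i.e.\ $a \notin \m$ for all $\m \in V(I) = \{\m \in \mSpec(R) : I \subseteq \m\}$. Since such an $a$ already avoids all maximal ideals in $V(I)$, and adding an element of $I$ does not change its class modulo any $\m \supseteq I$, the only obstruction to producing a genuine unit $u \in a + I$ (which would give $a \in R^\times + I$, hence (iii)) comes from the finitely many maximal ideals $\m_1, \dots, \m_n$ in the complement $\mSpec(R) \setminus V(I)$.

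First I would reduce the problem to a finite one: it suffices to find $i \in I$ with $a + i \notin \m_j$ for each $j = 1, \dots, n$, since then $u := a + i$ avoids every maximal ideal of $R$ and is therefore a unit, with $u - a = i \in I$. To control $a + i$ simultaneously at all the $\m_j$, I would pass to the product of residue fields $\prod_{j=1}^n R/\m_j$. The crucial point is that the image of $I$ in this product is everything: since each $\m_j \notin V(I)$ we have $I + \m_j = R$, whence $I + \bigcap_{j=1}^n \m_j = R$ (an ideal comaximal with each of finitely many ideals is comaximal with their intersection). As distinct maximal ideals, the $\m_j$ are moreover pairwise comaximal, so by the Chinese Remainder Theorem $R/\bigcap_{j=1}^n \m_j \cong \prod_{j=1}^n R/\m_j$, and thus $I$ surjects onto this product.

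With this surjectivity in hand, the rest is a finite avoidance argument: in each field $R/\m_j$, which has at least two elements, I can choose a value $c_j \ne -\bar{a}$, and then lift $(c_j)_j$ to some $i \in I$ via the surjection, so that $\overline{a + i} = \bar{a} + c_j \ne 0$ in each $R/\m_j$ as desired. The main obstacle is the comaximality step: establishing $I + \bigcap_{j=1}^n \m_j = R$ from $I + \m_j = R$ for each $j$. Once that is in place the argument closes immediately, and the degenerate case $n = 0$ recovers \Cref{radCase}.
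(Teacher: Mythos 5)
Your proof is correct and takes essentially the same route as the paper: both arguments use the Chinese Remainder Theorem for the comaximal family consisting of $I$ and the finitely many maximal ideals $\m_1, \ldots, \m_n$ not containing $I$, perturbing a lift of a unit of $R/I$ by an element of $I$ so that the result avoids every maximal ideal of $R$ and is therefore a unit. The only cosmetic differences are that you verify \Cref{firstChar}(iii) directly and prescribe an arbitrary residue $c_j \ne -\bar{a}$ modulo each $\m_j$ (making explicit the comaximality lemma $I + \bigcap_{j} \m_j = R$), whereas the paper forces the residue to equal $1$ by solving $a \equiv 1 - r \pmod{\m_i}$ and subsumes the comaximality under its citation of Chinese Remainder.
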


\begin{proof}
Write $\mSpec(R) \setminus V(I) := \{m_1, ..., m_n\}$,
so that $\{I, m_1, ..., m_n\}$ are pairwise comaximal (the case $n = 0$ is 
\Cref{radCase}). Let $p : R \twoheadrightarrow R/I$ be the canonical surjection,
pick $v \in (R/I)^\times$, and write $v = p(r)$ for some $r \in R$. By Chinese 
Remainder, there exists $a \in R$ with $a \equiv 0 \pmod I$, 
$a \equiv 1 - r \pmod {m_i}$ for $i = 1, ..., n$. Since $r$ is not contained in
any maximal ideal containing $I$, $r + a \in R^\times$, and 
$p(r + a) = p(r) = v$.
\end{proof}

\begin{cor} \label{semiCase}
Let $R$ be a semilocal ring, i.e. $|\mSpec(R)| < \infty$. Then $R$ has $(*)$.
\end{cor}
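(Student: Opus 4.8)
The plan is to deduce this directly from the preceding Theorem, with essentially no additional work. First I would recall that, by definition, a ring $R$ has $(*)$ precisely when \emph{every} $R$-ideal $I$ has $(*)$; so it suffices to verify the hypothesis of the Theorem for an arbitrary ideal $I$. The key observation is that if $R$ is semilocal, then $\mSpec(R)$ is a finite set, and hence for \emph{any} $R$-ideal $I$ the set $\mSpec(R) \setminus V(I)$ is a subset of the finite set $\mSpec(R)$, and is therefore itself finite. That is, the condition $|\mSpec(R) \setminus V(I)| < \infty$ holds automatically, uniformly over all $I$.

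Given this, the remaining steps are purely formal: for each $R$-ideal $I$, the Theorem applies (its hypothesis being satisfied by the observation above) to conclude that $I$ has $(*)$; and since $I$ was arbitrary, every ideal of $R$ has $(*)$, which is exactly the statement that $R$ has $(*)$. I do not expect any genuine obstacle here, since the Corollary is just the specialization of the Theorem to the situation in which the ambient maximal spectrum is already finite, so that the ``all but finitely many'' hypothesis becomes vacuous. The only point worth making explicit is the reduction from ``$R$ has $(*)$'' to ``every ideal $I$ has $(*)$'', which lets the ideal-level Theorem be invoked uniformly.
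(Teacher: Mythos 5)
Your proposal is correct and matches the paper's own proof exactly: the paper also deduces the corollary from the preceding theorem by observing that for any ideal $I$ of a semilocal ring, $\mSpec(R) \setminus V(I)$ is a subset of the finite set $\mSpec(R)$, hence finite. Your additional remark about unwinding ``$R$ has $(*)$'' into ``every ideal has $(*)$'' is just the paper's definition made explicit, not a different route.
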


\begin{proof}
If $R$ is semilocal, then for any $R$-ideal $I$, $\mSpec(R) \setminus V(I)$ is finite.
\end{proof}

\Cref{radCase} lends support to the idea that the Jacobson radical will not play a role
in whether or not a ring has $(*)$. This is indeed true, as the following 
reduction to the $J$-semisimple case (i.e. $\rad(R) = 0$) will show.

\begin{prop} \label{reduceToRad}
Let $R$ be a ring, $I$ an $R$-ideal, $p : R \twoheadrightarrow R/I$ the canonical surjection, and 
$\overline{p} : R/\rad(R) \twoheadrightarrow R/(\rad(R) + I)$ the map obtained 
by applying $\_ \otimes_R R/\rad(R)$. Then $p$ has $(*)$ iff $\overline{p}$ 
has $(*)$. In particular, $R$ has $(*)$ iff $R/\rad(R)$ has $(*)$.
\end{prop}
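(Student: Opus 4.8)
The plan is to test property $(*)$ on both sides through the saturation criterion \Cref{firstChar}(ii) and show that the two saturation conditions literally coincide. Write $J := \rad(R)$ and $\pi : R \to R/J$ for the quotient, so that $\ker \overline{p} = (J+I)/J = \pi(I)$, and $\overline{p}$ has $(*)$ iff $(R/J)^\times + \pi(I)$ is saturated in $R/J$. The engine of the whole argument is \Cref{radCase} applied to $J = \rad(R)$: it tells us that $J$ has $(*)$, i.e. $R^\times \to (R/J)^\times$ is surjective, so $\pi(R^\times) = (R/J)^\times$. Combined with the elementary identity $R^\times + J = R^\times$ (since $u + j = u(1 + u^{-1}j)$ is a unit for $j \in J$), this yields the two facts I would lean on: first, $\pi(R^\times + I) = (R/J)^\times + \pi(I)$; and second, the set $W := R^\times + I$ absorbs $J$, in the sense that $W + J = W$, so $W = \pi^{-1}(\pi(W))$.

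Granting these, the heart of the proof is the following claim: for any subset $W \subseteq R$ with $W + J = W$, saturation commutes with preimage, namely $W^\sim = \pi^{-1}\bigl((\pi W)^\sim\bigr)$. The inclusion $\subseteq$ is immediate, since $sr \in W$ forces $\pi(s)\pi(r) \in \pi(W)$, whence $\pi(r) \in (\pi W)^\sim$. For $\supseteq$, I would take $r$ with $\pi(r) \in (\pi W)^\sim$, choose $\bar s \in R/J$ with $\bar s\,\pi(r) \in \pi(W)$, and lift $\bar s$ to $s \in R$; then $\pi(sr) \in \pi(W)$, hence $sr \in \pi^{-1}(\pi W) = W + J = W$, giving $r \in W^\sim$. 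I expect this reverse inclusion to be the only delicate point, and the hypothesis $W + J = W$ — equivalently the absorption of $\rad(R)$ by the units, via \Cref{radCase} — is exactly what licenses discarding the $J$-error term $sr - w \in J$.

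With the claim in hand the equivalence becomes formal. Applying it to $W = R^\times + I$ and setting $\overline{W} := \pi(W) = (R/J)^\times + \pi(I)$, I obtain $W^\sim = \pi^{-1}(\overline{W}^\sim)$, while $W = \pi^{-1}(\overline{W})$ comes from $W + J = W$. Since $\pi$ is surjective, $\pi^{-1}$ is injective on subsets of $R/J$ (as $\pi(\pi^{-1}(A)) = A$), so $W = W^\sim$ holds iff $\overline{W} = \overline{W}^\sim$; by \Cref{firstChar}(ii) this is precisely the statement that $p$ has $(*)$ iff $\overline{p}$ has $(*)$.

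Finally, for the ``in particular'' clause I would run this equivalence over all ideals simultaneously. Every ideal of $R/J$ is of the form $\pi(I)$ for some $R$-ideal $I$ (take $I$ to be its preimage under $\pi$), and the first part shows $I$ has $(*)$ iff $\pi(I)$ has $(*)$. As $I$ ranges over all $R$-ideals, $\pi(I)$ ranges over all ideals of $R/J$, so every $R$-ideal has $(*)$ iff every ideal of $R/J$ has $(*)$; that is, $R$ has $(*)$ iff $R/\rad(R)$ has $(*)$.
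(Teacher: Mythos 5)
Your proof is correct, but it takes a genuinely different route from the paper's. The paper argues directly on unit groups via a diagram chase: for the forward direction it applies \Cref{radCase} to $\beta : R/I \twoheadrightarrow R/(\rad(R)+I)$ (since $(\rad(R)+I)/I \subseteq \rad(R/I)$); for the converse it lifts a unit of $R/(\rad(R)+I)$ to $\overline{u} \in (R/\rad(R))^\times$, lifts again to $u \in R^\times$ using \Cref{radCase} for $\alpha : R \twoheadrightarrow R/\rad(R)$, and absorbs the discrepancy $v - p(u) \in \ker\beta = p(\rad(R))$ via $R^\times + \rad(R) = R^\times$. You instead route everything through the saturation criterion \Cref{firstChar}(ii) and isolate a structural lemma: if $W + J = W$ with $J = \rad(R)$, then $W^\sim = \pi^{-1}\bigl((\pi W)^\sim\bigr)$, so saturation commutes with passage to $R/J$ for $J$-stable sets. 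The same two ingredients drive both proofs --- unit lifting modulo the radical (your $\pi(R^\times) = (R/J)^\times$, which is \Cref{radCase} with $I = \rad(R)$) and the absorption $R^\times + \rad(R) = R^\times$ (your hypothesis $W + J = W$, verified for $W = R^\times + I$) --- but your packaging invokes \Cref{radCase} only once, where the paper uses it for both vertical maps, and your lemma is a reusable fact rather than an ad hoc chase; the price is a slightly more abstract argument and the (easy, and correctly handled) bookkeeping that $\pi^{-1}$ is injective on subsets of $R/J$ and that \Cref{firstChar}(ii) applied in $R/J$ to the ideal $\pi(I) = \ker\overline{p}$ is exactly the statement that $\overline{p}$ has $(*)$. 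Your treatment of the ``in particular'' clause, running the equivalence over all ideals and using that every ideal of $R/J$ has the form $\pi(I)$, supplies an argument the paper leaves implicit.
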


\begin{proof}
Consider the commutative diagram of natural maps
\begin{center}
\begin{tikzpicture}[descr/.style={fill=white,inner sep=2.5pt}]
\matrix (m) [matrix of math nodes, row sep=3em,
column sep=3em]
{ R & R/I \\
R/\rad(R) & R/(\rad(R) + I) \\ };
\path[>=stealth,->>]
(m-1-1) edge node[auto] {$p$} (m-1-2)
(m-2-1) edge node[auto] {$\overline{p}$} (m-2-2)
(m-1-1) edge node[left] {$\alpha$} (m-2-1)
(m-1-2) edge node[auto] {$\beta$} (m-2-2);
\end{tikzpicture}
\end{center}
If $p^\times$ is surjective, then since $\beta^\times$ is surjective (by 
Corollary 2, as $(\rad(R) + I)/I \subseteq \rad(R/I)$), so is $\overline{p}^\times$.
Conversely, suppose $\overline{p}^\times$ is surjective, and let $v \in (R/I)^\times$. 
Then $\beta(v) \in (R/(\rad(R) + I))^\times$, so there exists $\overline{u} \in 
(R/\rad(R))^\times$ with $\overline{p}(\overline{u}) = \beta(v)$. By Corollary 2, 
$\alpha^\times$ is surjective, hence $\overline{u} = \alpha(u)$ for some 
$u \in R^\times$. Then $\beta(p(u)) = \overline{p}(\alpha(u)) = \beta(v)$, so 
$v - p(u) \in \ker \beta$. But $\ker \beta = p(\rad(R))$, so $v - p(u) = 
p(r)$ for some $r \in \rad(R)$. Then $v = p(u + r)$, and 
$u + r \in R^\times + \rad(R) = R^\times$.
\end{proof}

We can use \Cref{reduceToRad} to give examples of rings with $(*)$ that are not 
semilocal. Although the following lemma should be well-known, we include a 
proof for completeness.

\begin{lemma} \label{radOfProd}
For an arbitrary direct product of rings, 
$\rad(\prod_i R_i) = \prod_i \rad(R_i)$.
\end{lemma}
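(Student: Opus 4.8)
The plan is to avoid reasoning directly about the maximal ideals of the product $\prod_i R_i$ (which, for infinite index sets, need not be products of maximal ideals and are genuinely delicate), and instead to use the elementwise characterization of the Jacobson radical: for any commutative ring $R$,
\[
\rad(R) = \{x \in R \mid 1 - rx \in R^\times \text{ for all } r \in R\}.
\]
First I would recall the short proof of this characterization. If $x \in \rad(R)$ but $1 - rx$ were a non-unit for some $r$, then $1 - rx$ lies in some maximal ideal $\m$; but $x \in \m$ as well, forcing $1 \in \m$, a contradiction. Conversely, if $x \notin \m$ for some maximal $\m$, then $\m + (x) = R$, so $1 = m + rx$ for some $m \in \m$, $r \in R$, whence $1 - rx = m$ is a non-unit.

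The second ingredient is the fact that units in a direct product are exactly the tuples of units: an element $(u_i)_i \in \prod_i R_i$ is a unit if and only if each $u_i \in R_i^\times$, since an inverse must be computed coordinatewise. Combining these two facts, for $x = (x_i)_i$ I would argue that $x \in \rad(\prod_i R_i)$ iff $1 - rx$ is a unit for every $r = (r_i)_i$, iff $1 - r_i x_i \in R_i^\times$ for every $i$ and every choice of coordinates $r_i$.

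This immediately gives the containment $\prod_i \rad(R_i) \subseteq \rad(\prod_i R_i)$: if each $x_i \in \rad(R_i)$, then $1 - r_i x_i$ is a unit for every $r_i$, so $1 - rx$ is a unit for every $r$. For the reverse containment, given $x \in \rad(\prod_i R_i)$ and a fixed index $j$, I would test against the element $r \in \prod_i R_i$ supported only in coordinate $j$ (that is, $r_j$ arbitrary and $r_i = 0$ for $i \ne j$); this is a legitimate element of the product even when the index set is infinite, and it shows $1 - r_j x_j \in R_j^\times$ for every $r_j \in R_j$, hence $x_j \in \rad(R_j)$. Since $j$ was arbitrary, $x \in \prod_i \rad(R_i)$.

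There is no serious obstacle once the unit-characterization of $\rad$ is in hand; the only point requiring genuine care is precisely to route around the maximal ideals of the infinite product rather than attempting to describe them directly, which is why the proof is phrased entirely in terms of the unit condition and coordinate-supported test elements.
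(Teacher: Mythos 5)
Your proof is correct, and the first containment coincides exactly with the paper's: both use the element-wise characterization $\rad(R) = \{x \in R \mid 1 - rx \in R^\times \text{ for all } r \in R\}$ together with the fact that $(\prod_i R_i)^\times = \prod_i R_i^\times$. Where you genuinely diverge is the reverse containment $\rad(\prod_i R_i) \subseteq \prod_i \rad(R_i)$: the paper invokes the general fact that any surjective ring map $\varphi : R \twoheadrightarrow S$ satisfies $\varphi(\rad(R)) \subseteq \rad(S)$, applied to each projection $\pi_j : \prod_i R_i \twoheadrightarrow R_j$, whereas you verify the inclusion directly by testing against elements supported in a single coordinate ($r_j$ arbitrary, $r_i = 0$ for $i \ne j$), which forces $1 - r_j x_j \in R_j^\times$ for all $r_j$ and hence $x_j \in \rad(R_j)$. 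The difference is modest but real: your route is entirely self-contained within the unit characterization (you even re-prove the characterization itself) and uniform across both inclusions, at the cost of a little more computation; the paper's route buys brevity by outsourcing to a standard functoriality lemma about $\rad$ under surjections. Both correctly sidestep the subtlety you identify, namely that maximal ideals of an infinite product need not be products of maximal ideals, so element-wise or functorial arguments are the right tools and no description of $\mSpec(\prod_i R_i)$ is ever needed.
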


\begin{proof}
$\supseteq$: let $(a_i) \in \prod_i \rad(R_i)$. Then for each $i$ and any
$b_i \in R_i$, $1 - a_ib_i \in R_i^\times$, so every $b = (b_i) \in \prod_i R_i$ 
satisfies $1 - ab = (1 - a_ib_i) \in \prod_i R_i^\times = (\prod_i R_i)^\times$. 

$\subseteq:$ for any surjective ring map $\varphi : R \twoheadrightarrow S$,
$\varphi(\rad(R)) \subseteq \rad(S)$, so applying this to each natural
projection $\pi_j : \prod_i R_i \twoheadrightarrow R_j$ gives 
$\pi_j(\rad(\prod_i R_i)) \subseteq \rad(R_j)$.
\end{proof}

\begin{ex} \label{prodCase}
i) If $R = \prod_i R_i$ is an arbitrary product of semilocal rings, then $R$ has $(*)$ 
(note that such a ring can have infinite Krull dimension, cf. \cite{GH}).
To see this, note that by \Cref{reduceToRad} and \Cref{radOfProd}, it suffices to show 
that any product of fields has $(*)$. Thus, let $R = \prod_i k_i$, where $k_i$
are fields. Using \Cref{firstChar}(iii), let $I$ be an 
$R$-ideal, and $a = (a_i) \in (1+I)^\sim$, such that $1 - ab \in I$ for some $b \in R$. 
Let $J$ be the set of indices $j$ such that $a_j = 0$, and let $e_J$ be
the indicator vector of $J$, i.e. $e_J := (e_i) \in R$, where 
$e_i := \begin{cases}
1, \; i \in J \\
0, \; i \not \in J
\end{cases}$.
Then $e_J(1 - ab) \in I$, and satisfies $(e_J(1 - ab))_i = 0$ iff $i \not \in J$ 
(note that if $i \in J$, then $a_i = 0$ and both $(1 - ab)_i
= 1 - a_ib_i$ and $(e_J)_i$ equal $1$, so their product is also $1 \ne 0$, whereas if 
$i \not \in J$, then $(e_J)_i$ is already $0$). Thus $(a + e_J(1 - ab))_i$ is nonzero 
for every $i$, hence $a + e_J(1 - ab) \in R^\times \implies a \in R^\times + I$.

ii) Via a different approach, we can also show that $(*)$ passes to finite products. 
Let $R = \prod_{i=1}^n R_i$, where $R_i$ have $(*)$. Using \Cref{firstChar}(iv), let 
$I$ be an $R$-ideal. Then $I = \prod_{i=1}^n I_i$ for $R_i$-ideals
$I_i$. Let $a = (a_i) \in R$ be such that $1 - ab \in I$ for some $b = (b_i) \in R$. Then
$1 - a_ib_i \in I_i$ for each $i$, so there exists $u_i \in R_i^\times$ with $1 - a_iu_i \in
I_i$. Thus $u = (u_i) \in \prod_{i=1}^n R_i^\times = R^\times$, and $1 - au \in I$.

iii) In view of \Cref{prodCase}(i), as the diagonal map 
$\Z \hookrightarrow \prod_{p \text{ prime}} \Z/p\Z$ is injective, we see that $(*)$ 
does not pass to subrings. On the other hand, it is easy to see that $(*)$ passes 
to quotient rings.
\end{ex}

We briefly turn to the graded case. Let $R = \bigoplus_{i \ge 0} R_i$ be 
a $\Z_{\geq 0}$-graded ring, $I = \bigoplus_{i \ge 0} I_i$ a graded $R$-ideal, and 
$p : R \twoheadrightarrow R/I$ the canonical surjection, a graded
ring map of degree 0. Let $p_0 : R_0 \twoheadrightarrow (R/I)_0 = R_0/I_0$ be
the induced ring map of degree 0 components. In general, the units of $R$ need 
not be graded. However, with some primality assumptions we may reduce to the 
ungraded case, as follows:

\begin{prop}
Suppose $I$ is prime. If $p_0$ has $(*)$, then $p$ has $(*)$.
The converse holds if $R$ is a domain. 
\end{prop}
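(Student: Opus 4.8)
The plan is to exploit the fact that primality of $I$ forces $R/I$ to be a $\Z_{\geq 0}$-graded domain, and that in any $\Z_{\geq 0}$-graded domain every unit is homogeneous of degree $0$. Indeed, if $v = v_0 + \cdots + v_d$ is a unit with inverse $w = w_0 + \cdots + w_e$, where $v_d, w_e$ are the top nonzero homogeneous components, then the degree-$(d+e)$ component of $vw$ is $v_d w_e$, which is nonzero since $R/I$ is a domain; as $vw = 1$ lives in degree $0$, this forces $d + e = 0$, hence $d = e = 0$. Therefore $(R/I)^\times = ((R/I)_0)^\times = (R_0/I_0)^\times$ as subsets of $R/I$, the last identification being the one already noted for $p_0$. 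Establishing this homogeneity cleanly is the one genuinely nontrivial step; everything else is degree bookkeeping, and I would verify $(*)$ directly from the definition rather than through \Cref{firstChar}.

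For the forward direction I would take any $v \in (R/I)^\times$. By the observation above $v \in (R_0/I_0)^\times$, and since $p_0$ has $(*)$ there is a unit $u_0 \in R_0^\times$ with $p_0(u_0) = v$. Because $R_0$ is a subring of $R$ containing $1$, any unit of $R_0$ is a unit of $R$ (so $R_0^\times \subseteq R^\times$, needing no hypothesis on $R$), and the composite $R_0 \hookrightarrow R \xrightarrow{p} R/I$ lands in $(R/I)_0 = R_0/I_0$ and equals $p_0$. Hence $p(u_0) = v$ with $u_0 \in R^\times$, and $p^\times$ is surjective. Note that only the primality of $I$ is used here, matching the asymmetry of the statement.

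For the converse, assume $R$ is a domain, so that $R$ itself is a graded domain and the same argument gives $R^\times = R_0^\times$. Take any $v_0 \in (R_0/I_0)^\times$; viewing $v_0$ as an element of $R/I$ via $(R/I)_0 \subseteq R/I$, it is a fortiori a unit of $R/I$. Since $p$ has $(*)$, there is $u \in R^\times$ with $p(u) = v_0$. Now $R^\times = R_0^\times$ places $u$ in $R_0$, and since $p|_{R_0} = p_0$ we obtain $p_0(u) = v_0$ with $u \in R_0^\times$, so $p_0^\times$ is surjective.

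The whole argument is thus a reduction via homogeneity of units, and I expect the only real content to reside in that reduction. It is worth emphasizing that the converse genuinely requires the domain hypothesis: without it one cannot conclude that the lifted unit $u \in R^\times$ lies in degree $0$ (equivalently in $R_0$), which is precisely what is needed to descend it along $p_0$.
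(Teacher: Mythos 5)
Your proof is correct and takes essentially the same route as the paper's: both directions rest on the fact that a $\Z_{\geq 0}$-graded domain has units only in degree $0$, applied to $R/I$ for the forward implication and to $R$ itself for the converse, with the identification $p|_{R_0} = p_0$ doing the rest. The only difference is cosmetic --- you spell out the top-degree-component argument for homogeneity of units, which the paper asserts without proof.
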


\begin{proof}
If $I$ is prime, then $R/I$ is a positively graded domain, which has units
only in degree 0, i.e. $(R/I)^\times \subseteq (R/I)_0$. Then $(R/I)^\times = 
((R/I)_0)^\times = p_0^\times(R_0^\times) \subseteq p^\times(R^\times)$, 
and the first statement follows. 
Conversely, if $R$ is a domain, then $R^\times \subseteq R_0$, so 
$R^\times = (R_0)^\times$ and $p(R^\times) = p_0(R_0^\times)$.
\end{proof}

\begin{cor} \label{gradedCase}
If $I \subseteq R_+ = \bigoplus_{i \ge 1} R_i$ is prime,
then $I$ has $(*)$.
\end{cor}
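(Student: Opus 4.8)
The plan is to deduce this directly from the preceding Proposition, whose hypothesis requires only that $I$ be prime (given) and that the degree-$0$ map $p_0 : R_0 \twoheadrightarrow (R/I)_0 = R_0/I_0$ have $(*)$. So the entire task reduces to checking that $p_0$ has $(*)$, and the key observation is that the containment $I \subseteq R_+$ makes this automatic.

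First I would note that since $I = \bigoplus_{i \ge 0} I_i$ is graded and contained in $R_+ = \bigoplus_{i \ge 1} R_i$, its degree-$0$ component satisfies $I_0 \subseteq (R_+)_0 = 0$, hence $I_0 = 0$. Consequently the induced map on degree-$0$ parts, $p_0 : R_0 \twoheadrightarrow R_0/I_0 = R_0$, is the identity map — in particular an isomorphism. An isomorphism induces an isomorphism (hence a surjection) on unit groups, so $p_0^\times$ is surjective and $p_0$ has $(*)$ trivially. With $I$ prime and $p_0$ having $(*)$, the previous Proposition applies verbatim and yields that $p$ has $(*)$, i.e.\ $I$ has $(*)$, as desired.

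I expect no genuine obstacle here: all of the substantive work lives in the previous Proposition (reducing the graded surjection to its degree-$0$ part under the primality assumption on $I$), and the only thing this corollary contributes is the elementary remark that $I \subseteq R_+$ forces $I_0 = 0$, trivializing the hypothesis on $p_0$.
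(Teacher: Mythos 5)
Your proposal is correct and matches the paper's own proof exactly: both observe that $I \subseteq R_+$ forces $I_0 = 0$, so $p_0$ is the identity on $R_0$ and trivially has $(*)$, after which the preceding Proposition (using the primality of $I$) gives the conclusion. No gaps and no meaningful difference in approach.
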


\begin{proof}
In this case, $I_0 = 0$, so $p_0 : R_0 \rightarrow R_0$ is the identity, hence
$p_0$ has $(*)$. 
\end{proof}

To motivate the next section, we briefly summarize the results thus far: we 
have seen that property $(*)$ for a ring $R$ depends only on the $J$-semisimple
reduction $R/\rad(R)$. Since the $J$-semisimple reduction of a semilocal ring 
is a finite product of fields, this gives an alternate proof of \Cref{semiCase}. However,
being semilocal is not a necessary condition for a ring to have $(*)$, as an 
infinite product of fields is never semilocal. Despite this, the examples given 
so far of rings with $(*)$ are quite similar - e.g. they all share the property 
that the $J$-semisimple reduction is 0-dimensional. 

From a different angle, one can start with the observation that for any ring $R$,
if $r \in R$ is a nonunit, then $R \twoheadrightarrow R/(r^2)$ is such
that $1 + r$ goes to a unit in $R/(r^2)$, with inverse $1 - r$. In particular, if a 
ring $R$ is to have $(*)$, then necessarily any element $r$ must satisfy
$1 + r \in R^\times + (r^2)$, i.e. for any $r \in R$, 
there exists $s \in R$ such that $1 + r - sr^2 \in R^\times$. Recalling that 
$\rad(R) = \{r \in R \mid 1 + (r) \subseteq R^\times \}$,
this will certainly be satisfied if for every $r \in R$,
there exists $s \in R$ with $r - sr^2 = r(1 - sr) \in \rad(R)$. It is this last
condition which we now examine in detail.

\section{Semi-inverses}

Returning to a general setting (laying aside for now the surjectivity question), 
let $R$ be a ring, and $r \in R$. The failure of
$r$ to be a unit is encoded in the set of maximal ideals which contain $r$
-- namely, $r$ is a unit iff $r$ is not contained in any maximal ideal. 
Furthermore, when this occurs there is a unique element $r^{-1}$, with 
$1 - r^{-1} \cdot r = 0 \in \m$ for every maximal ideal $\m$. Generalizing this 
basic fact gives an analogous notion for any $r \in R$:

\begin{mydef}
Let $R$ be a ring, $r \in R$.
A subset $S \subseteq R$ is called a \textit{semi-inverse set for r}
if for every maximal ideal $\m \in \mSpec(R)$, either $r \in \m$, or there 
exists $s \in S$ with $1 - sr \in \m$.
\end{mydef}

Notice that the two cases in the definition above are exhaustive and 
mutually exclusive: i.e. for any $r \in R$ and any $\m \in \mSpec(R)$, it is 
always the case that either $r \in \m$ or there exists $s \in R$ with $1 - sr 
\in \m$, and both cases cannot occur simultaneously. Notice that existence
of semi-inverse sets follows from the Axiom of Choice:
for every maximal ideal $\m$ not containing $r$, the image $\overline{r} \in 
R/\m$ is a unit, so there exists $\overline{s} \in R/\m$ with $\overline{r} \cdot
\overline{s} = \overline{1}$, i.e. $1 - sr \in \m$. This also shows that for any
$r \in R$, the minimum size of a semi-inverse set for $r$ is at most
$|\mSpec(R) \setminus V(r)|$, which leads to the following definition:

\begin{mydef}
For a ring $R$, define a function $\rho : R \to \mathbb{N} \cup \{ \infty \}$ by
$$\rho(r) := 
\begin{cases}
\min\{ |S| : S \text{ semi-inverse set for $r$} \}, & \text{if $r$ has a finite semi-inverse set} \\
\infty, & \text{if $r$ has no finite semi-inverse set}
\end{cases}$$
\end{mydef}

The possible values that the function $\rho$ can attain are rather limited:

\begin{prop} \label{rhoValues}
Let $R$ be a ring, $r \in R$. Then $\rho(r) < \infty$ iff $\rho(r) \in \{0, 1\}$.
\end{prop}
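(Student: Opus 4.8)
The claim is that $\rho(r) \in \{0, 1, \infty\}$ always, i.e. if $r$ has a finite semi-inverse set at all, then it has one of size $\le 1$. The plan is to show that a semi-inverse set of size $\ge 2$ can always be collapsed to a single element, so that finiteness forces $\rho(r) \le 1$. First I would dispose of the degenerate cases: $\rho(r) = 0$ means the empty set is a semi-inverse set, which happens exactly when $r \in \m$ for every maximal ideal, i.e. $r \in \rad(R)$; and $\rho(r) = 1$ means there is a single $s$ with $1 - sr \in \m$ for every maximal ideal not containing $r$.

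The heart of the argument is to show: if $S = \{s_1, \dots, s_n\}$ is a finite semi-inverse set for $r$, then there is a \emph{single} element $s$ that works for all relevant maximal ideals simultaneously. The natural approach is to partition $\mSpec(R) \setminus V(r)$ according to which $s_j$ serves as a semi-inverse, and then try to patch the $s_j$ together. The obvious candidate to try is something like a sum or a cleverly weighted combination of the $s_j$; concretely, I would look at whether $s := \sum_j s_j \prod_{k \ne j}(\text{something vanishing off the }j\text{-th piece})$ can be made to satisfy $1 - sr \in \m$ for every $\m \not\supseteq r$. However, without idempotents splitting off these pieces (as we had in the product-of-fields computation in \Cref{prodCase}(i)), such a patching need not exist globally.

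\textbf{The main obstacle} I anticipate is exactly this patching step: there is no reason a finite collection of local semi-inverses should glue to a global one, since the sets of maximal ideals on which distinct $s_j$ work may overlap or interleave in a way not controlled by any single ring element. I therefore expect the correct proof to run the \emph{contrapositive} rather than attempting a direct gluing: assume $\rho(r) \notin \{0,1\}$, i.e. no single element is a semi-inverse, and show $\rho(r) = \infty$ by producing, for any proposed finite set $S$, a maximal ideal $\m$ witnessing its failure. The key observation driving this is that for a \emph{fixed} $s$, the condition ``$1 - sr \in \m$'' picks out a distinguished closed/open locus in $\mSpec(R)$, and the failure of $s$ to be a global semi-inverse means the locus where $1 - sr \notin \m$ (yet $r \notin \m$) is nonempty.

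Concretely, I would argue as follows. Suppose $\rho(r) \ge 2$, and let $S$ be any finite semi-inverse set; I want a contradiction with minimality, ultimately showing no finite set works. Fix any $s \in R$; since $\{s\}$ is not a semi-inverse set (as $\rho(r) \ne 1$) and $r \notin \rad(R)$ (as $\rho(r) \ne 0$), there is some $\m$ with $r \notin \m$ but $1 - sr \notin \m$. The plan is to show that given any finite set $\{s_1, \dots, s_n\}$, one can combine the individual failures: the element $t = s_1 + s_2 - s_1 s_2 r$ (the ``semi-inverse composition'' suggested by $1 - tr = (1-s_1 r)(1 - s_2 r)$) satisfies $1 - tr \in \m$ whenever \emph{either} $1 - s_1 r \in \m$ \emph{or} $1 - s_2 r \in \m$. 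Hence a two-element semi-inverse set $\{s_1, s_2\}$ collapses to the single element $t$, giving $\rho(r) \le 1$, and by induction any finite semi-inverse set collapses to one element. This is the crux: the identity $1 - tr = (1 - s_1 r)(1 - s_2 r)$ shows the semi-inverse property is closed under a binary operation, so finiteness of $\rho(r)$ immediately forces $\rho(r) \le 1$. Thus the real content is this multiplicativity identity, and once it is in hand the proposition follows by a trivial induction collapsing $\{s_1, \dots, s_n\}$ to a single element.
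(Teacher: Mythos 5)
Your proof is correct and is essentially the paper's argument: the paper multiplies all the factors at once, observing that $\prod_{i=1}^n(1-s_ir)=1-sr$ for some $s\in R$, which is exactly the $n$-fold version of your binary identity $1-tr=(1-s_1r)(1-s_2r)$ with $t=s_1+s_2-s_1s_2r$, applied without the induction. The exploratory discussion of patching and contrapositives in your write-up is dispensable; the multiplicativity identity you isolate at the end is the whole content, just as in the paper.
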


\begin{proof}
Suppose $\rho(r) \ne \infty$, and let $S = \{s_1, \ldots, s_n\} $ be a finite semi-inverse 
set for $r$. Now $\prod_{i=1}^n (1 - s_i r) = 1 - sr$ for some $s \in R$ (since the product
is finite). Thus $r(1 - sr) = r \prod_{i=1}^n (1 - s_i r) \in \rad(R)$, so $\{s\}$ is a 
semi-inverse set for $r$, and $\rho(r) \le 1$. 
\end{proof}

\begin{prop} \label{rhoZero}
Let $R$ be a ring, $r \in R$. Then $\rho(r) = 0$ iff $r \in \rad(R)$.
\end{prop}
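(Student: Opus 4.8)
The plan is to simply unwind the definitions, observing that $\rho(r) = 0$ is equivalent to the empty set being a semi-inverse set for $r$. First I would note that, since $\rho(r)$ is defined as the minimum cardinality among semi-inverse sets for $r$, the value $\rho(r) = 0$ holds precisely when the empty set $\emptyset$ itself qualifies as a semi-inverse set for $r$ (there being no set of smaller cardinality).

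Next I would examine the defining condition of a semi-inverse set in the case $S = \emptyset$. For a fixed $\m \in \mSpec(R)$, the second alternative in the definition---that there exists $s \in S$ with $1 - sr \in \m$---is vacuously false when $S$ is empty, since $S$ has no elements to witness it. Hence the condition collapses to requiring the first alternative, namely $r \in \m$, to hold for every maximal ideal $\m$. This is exactly the statement $r \in \bigcap_{\m \in \mSpec(R)} \m = \rad(R)$, which yields both implications of the biconditional simultaneously.

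The only point requiring any care is the correct handling of the vacuous quantifier over the empty set; beyond this bit of bookkeeping there is no genuine obstacle. Indeed, the proposition amounts to a restatement of the definition of the Jacobson radical in the language of semi-inverse sets, so I expect the proof to be a single short paragraph.
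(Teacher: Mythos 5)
Your proof is correct and takes essentially the same route as the paper: both arguments reduce $\rho(r)=0$ to the statement that $\emptyset$ is a semi-inverse set for $r$, which (by the vacuity of the existential clause) forces $r \in \m$ for every $\m \in \mSpec(R)$, i.e.\ $r \in \rad(R)$. The paper phrases the converse contrapositively (if $r \notin \m$ then any semi-inverse set has $|S| \ge 1$), but this is the same observation you make about the empty existential quantifier.
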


\begin{proof}
If $r \in \rad(R)$, then $\emptyset$ is a semi-inverse set for $r$. Conversely, if 
$r \not \in \m$ for some $\m \in \mSpec(R)$, then if $S$ is any semi-inverse set 
for $r$, there must exist $s \in S$ with $1 - sr \in \m$, so $|S| \ge 1$, hence 
$\rho(r) \ge 1$.
\end{proof}

\begin{prop}
Let $R$ be a ring. Then $R^\times \subseteq \rho^{-1}(\{1\})$, and 
equality holds iff $\Spec(R/\rad(R))$ is connected.
\end{prop}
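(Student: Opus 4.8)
The plan is to first dispatch the inclusion $R^\times \subseteq \rho^{-1}(\{1\})$, and then reduce the equality statement to the $J$-semisimple case before analyzing it via idempotents. For the inclusion, if $r \in R^\times$ then $\{r^{-1}\}$ is a semi-inverse set for $r$, since $1 - r^{-1} r = 0 \in \m$ for every $\m \in \mSpec(R)$; moreover $r \notin \m$ for any maximal $\m$, so $r \notin \rad(R)$ and \Cref{rhoZero} forces $\rho(r) \neq 0$. Combined with \Cref{rhoValues}, this gives $\rho(r) = 1$.

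For the equality, I would observe that both $R^\times$ and $\rho^{-1}(\{1\})$ are full preimages of the analogous sets under the surjection $q : R \twoheadrightarrow \overline{R} := R/\rad(R)$. Indeed, maximal ideals of $R$ correspond bijectively to those of $\overline{R}$, so $S$ is a semi-inverse set for $r$ iff $q(S)$ is one for $q(r)$; combined with \Cref{rhoValues} and \Cref{rhoZero} this yields $\rho_R(r) = \rho_{\overline{R}}(q(r))$. Likewise $r \in R^\times$ iff $q(r) \in \overline{R}^\times$, since an element is a unit exactly when it lies in no maximal ideal. Hence $R^\times = \rho_R^{-1}(\{1\})$ iff $\overline{R}^\times = \rho_{\overline{R}}^{-1}(\{1\})$, and since $\Spec(R/\rad(R)) = \Spec(\overline{R})$, I may assume $\rad(R) = 0$.

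Assuming $\rad(R) = 0$, note that $\rho(r) = 1$ means precisely that $r \neq 0$ and $r(1 - sr) = 0$ for some $s \in R$ (that $\rho(r) \le 1$ is equivalent to the existence of such an $s$ follows from the fact that a maximal ideal $\m$ contains $r(1-sr)$ iff $r \in \m$ or $1 - sr \in \m$, exactly as in the proof of \Cref{rhoValues}). The key computation is that $r = sr^2$ makes $e := sr$ idempotent: by commutativity $e^2 = s^2 r^2 = s(sr^2) = sr = e$, and $er = sr^2 = r$, so $e \neq 0$ whenever $r \neq 0$. Recalling the standard fact that $\Spec(R)$ is connected iff $R$ has no idempotents besides $0$ and $1$, both directions now follow. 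If $\Spec(R)$ is connected, then any $r \in \rho^{-1}(\{1\})$ produces a nonzero idempotent $e = sr$, forcing $e = 1$, i.e.\ $sr = 1$ and $r \in R^\times$; together with the first inclusion this gives equality. Conversely, if $\Spec(R)$ is disconnected, a nontrivial idempotent $e$ satisfies $e = 1 \cdot e^2$ and $e \neq 0$, so $\rho(e) = 1$, yet $e \notin R^\times$ because the only invertible idempotent is $1$; thus $e \in \rho^{-1}(\{1\}) \setminus R^\times$.

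I expect the main subtlety to lie in the reduction step rather than in the final idempotent argument: one must check carefully that $\rho$ descends along $q$ and that both $R^\times$ and $\rho^{-1}(\{1\})$ are genuine preimages, so that the equality may legitimately be tested after killing the radical. Once this is in place, the identification of the non-unit members of $\rho^{-1}(\{1\})$ with nontrivial idempotents, via $e = sr$, is the decisive and essentially computational point.
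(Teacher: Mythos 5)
Your proof is correct and follows essentially the same route as the paper: both identify the non-unit elements of $\rho^{-1}(\{1\})$ with nontrivial idempotents of $R/\rad(R)$ via the computation that $r = sr^2$ makes $\overline{s}\overline{r}$ idempotent, and both use the converse observation that a nontrivial idempotent $e$ satisfies $e = 1\cdot e^2$, hence $\rho(e) = 1$ while $e \notin R^\times$. The only difference is organizational: you make the descent of $\rho$ and of $R^\times$ along $q : R \twoheadrightarrow R/\rad(R)$ into an explicit (and correctly justified) reduction to the case $\rad(R) = 0$, whereas the paper carries out the same idempotent argument inline in $R/\rad(R)$, using the standard fact that units lift along the Jacobson radical.
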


\begin{proof}
If $u \in R^\times$, then $\{u^{-1}\}$ is a semi-inverse set for $u$, so $\rho(u) = 1$
(as $u \not \in \rad(R) \implies \rho(u) \ne 0$). For the second statement, suppose
$R/\rad(R)$ has no idempotents, and pick $r \in R$, $\rho(r) = 1$. Let $\{s\}$ be 
a semi-inverse set for $r$, so $r(1 - sr) \in \rad(R)$. Then $\overline{r} = \overline{s}
\cdot \overline{r}^2$ in $R/\rad(R)$, so $\overline{s} \cdot \overline{r}$ is idempotent 
in $R/\rad(R)$. By assumption $\overline{s} \cdot \overline{r} = \overline{0}$ or 
$\overline{1}$. If $\overline{s} \cdot \overline{r} = \overline{0}$, then $\overline{r} = 
(\overline{s} \cdot \overline{r}) \overline{r} = \overline{0}$, i.e. $r \in \rad(R)$, but this 
cannot happen if $\rho(r) = 1$. Thus $\overline{s} \cdot \overline{r} = \overline{1}$, 
so $r$ is a unit modulo $\rad(R)$, hence $r$ is in fact a unit in $R$. 

Conversely, suppose $\rho^{-1}(\{1\}) = R^\times$, and let $r \in R$ 
with $\overline{0} \ne \overline{r}$ idempotent in $R/\rad(R)$. Then $r - r^2 
\in \rad(R)$, so $\{1\}$ is a semi-inverse set for $r$, i.e. $\rho(r) = 1$, so 
$r \in R^\times$. This implies $R/\rad(R)$ has only trivial idempotents, hence 
has connected spectrum.
\end{proof}

\begin{rem}
i) If $\Spec(R/\rad(R))$ is connected, then $\Spec(R)$ is also connected: if
$e \in R$ is idempotent, then $\overline{e} \in R/\rad(R)$ is also idempotent, so 
(replacing $e$ by $1 - e$ if necessary) $\overline{0} = \overline{e} \implies 
e \in \rad(R) \implies 1 - e \in R^\times$, hence $e(1 - e) = 0 \implies e = 0$.

ii) If $R$ is the coordinate ring of an (irreducible) affine variety (i.e. a finitely
generated domain over a field), then $\Spec(R/\rad(R))$ is connected. 
\end{rem}

\Cref{rhoValues} and \Cref{rhoZero} indicate that the only interesting behavior 
occurs for elements $r \in R$ with $\rho(r) = 1$, which motivates the following 
definition:

\begin{mydef}
Let $R$ be a ring, $r \in R$. If $\rho(r) = 1$, we say that $r$ is a \textit{semi-unit}.
In this case, if $\{s\}$ is a semi-inverse set for $r$, 
we say that $s$ is \textit{a semi-inverse of r}. If every element of $R$ is either a 
semi-unit or in the Jacobson radical (i.e. $\rho(R) \subseteq \{0, 1\}$), we say 
that $R$ is a \textit{semi-field}.
\end{mydef}

\begin{rem}
According to the definition, only semi-units can have semi-inverses, so although
$\{1\}$ (or indeed any singleton set) is a semi-inverse set for $0$, $1$ is not
treated as a semi-inverse of $0$. Also, the relation of being a semi-inverse need
not be symmetric: e.g. in $\Z/10\Z$, $3$ is a semi-inverse of $2$ (as $2 \equiv
3 \cdot 2^2 \bmod 10$), but $2$ is not a semi-inverse of $3$ ($3 \not \equiv
2 \cdot 3^2 \bmod 10$). However, notice that $2$ and $8$ are
semi-inverses of each other.
\end{rem}

The following proposition addresses uniqueness of semi-inverses:

\begin{prop}
Let $R$ be a ring, $r \in R$ a semi-unit. If $s_1, s_2 \in R$ are 
semi-inverses of $r$, then $s_1 - s_2 \in \rad(R) :_R r$. Conversely, if $s$ is a
semi-inverse of $r$ and $a \in \rad(R) :_R r$, then $s + a$ is a semi-inverse
of $r$.
\end{prop}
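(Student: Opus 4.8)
The plan is to first reformulate the semi-inverse condition as a single membership in the Jacobson radical, and then reduce both statements to elementary manipulations modulo $\rad(R)$. Specifically, I would begin by recording that, for our semi-unit $r$, an element $s$ is a semi-inverse of $r$ if and only if $r(1 - sr) = r - sr^2 \in \rad(R)$. This equivalence is exactly the reasoning already appearing in the proof of \Cref{rhoValues}: since each maximal ideal $\m$ is prime, $r(1-sr) \in \m$ holds iff $r \in \m$ or $1 - sr \in \m$, so intersecting over all $\m \in \mSpec(R)$ converts the defining condition for $\{s\}$ being a semi-inverse set into the single condition $r - sr^2 \in \rad(R)$. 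Having this characterization in hand turns the whole proposition into statements about $\rad(R)$, which I can then attack directly.

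For the forward direction, suppose $s_1, s_2$ are both semi-inverses of $r$, so that $r - s_1 r^2 \in \rad(R)$ and $r - s_2 r^2 \in \rad(R)$. Subtracting gives $(s_1 - s_2) r^2 \in \rad(R)$, and the goal is to improve this to $(s_1 - s_2) r \in \rad(R)$, i.e. $s_1 - s_2 \in \rad(R) :_R r$. Working modulo $\rad(R)$, and writing $\overline{x}$ for the image of $x \in R$ in $R/\rad(R)$, the relation $r - s_1 r^2 \in \rad(R)$ says $\overline{r} = \overline{s_1}\,\overline{r}^2$, so I can substitute for $\overline{r}$ and compute $(\overline{s_1} - \overline{s_2})\,\overline{r} = (\overline{s_1} - \overline{s_2})\,\overline{s_1}\,\overline{r}^2 = \overline{s_1}\,(\overline{s_1} - \overline{s_2})\,\overline{r}^2 = \overline{0}$, the last equality because $(s_1 - s_2) r^2 \in \rad(R)$. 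This yields $(s_1 - s_2) r \in \rad(R)$ as desired. (Alternatively one can argue maximal ideal by maximal ideal: from $(s_1 - s_2) r^2 \in \m$, either $r \in \m$, giving $(s_1 - s_2) r \in \m$ immediately, or $r \notin \m$, in which case $r^2 \notin \m$ forces $s_1 - s_2 \in \m$ by primality.)

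For the converse, suppose $s$ is a semi-inverse of $r$ and $a \in \rad(R) :_R r$, so that $r - sr^2 \in \rad(R)$ and $ar \in \rad(R)$. Since $r$ remains a semi-unit by hypothesis, it suffices to verify $r - (s+a)r^2 \in \rad(R)$. I then simply compute $r - (s + a) r^2 = (r - sr^2) - (ar)\,r$; the first summand lies in $\rad(R)$ by hypothesis, and the second lies in $\rad(R)$ because $ar \in \rad(R)$ and $\rad(R)$ is an ideal. Hence $r - (s+a) r^2 \in \rad(R)$, which by the characterization above means $s + a$ is a semi-inverse of $r$.

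The only genuinely nontrivial point is the bootstrap in the forward direction --- passing from $(s_1 - s_2) r^2 \in \rad(R)$ down to $(s_1 - s_2) r \in \rad(R)$ --- since one cannot naively cancel a (possibly zero-divisor) factor of $r$. This is where the semi-inverse relation earns its keep: substituting $\overline{r} = \overline{s_1}\,\overline{r}^2$ supplies the extra factor of $r$ that absorbs the discrepancy. The converse is a one-line computation, and the reformulation of semi-inverses via $\rad(R)$ does all of the real organizational work.
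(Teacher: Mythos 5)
Your proof is correct and follows essentially the same route as the paper: both translate the semi-inverse condition into $r(1 - sr) \in \rad(R)$, subtract the two relations for the forward direction, and absorb the $a$-term for the converse. The only cosmetic difference is the bootstrap from $(s_1 - s_2)r^2 \in \rad(R)$ to $(s_1 - s_2)r \in \rad(R)$: the paper proves $\rad(R) :_R r^2 = \rad(R) :_R r$ using that $\rad(R)$ is a radical ideal (via $(ar)^2 = a(ar^2)$), whereas you substitute $\overline{r} = \overline{s_1}\,\overline{r}^2$ --- though your parenthetical prime-by-prime alternative is exactly the paper's radical-ideal argument in disguise.
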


\begin{proof}
If $s_1, s_2$ are semi-inverses of $r$, then $r(1 - s_1 r), r(1 - s_2 r) \in \rad(R)$,
so $r(1 - s_1 r) - r(1 - s_2 r) = (s_2 - s_1)r^2 \in \rad(R)$, i.e. $s_2 - s_1 \in 
\rad(R) : r^2$. For the second statement, if $s$ is a semi-inverse of $r$ and
$a \in \rad(R) : r^2$, then $r(1 - sr), ar^2 \in \rad(R)$, so $r(1 - (s + a)r) = r(1 - sr)
- ar^2 \in \rad(R)$ also.

Finally, notice that $\rad(R) : r^2 = \rad(R) : r$, since if $ar^2 \in \rad(R)$, then 
$(ar)^2 = a(ar^2) \in \rad(R) \implies ar \in \rad(R)$, as $\rad(R)$ is a radical
ideal.
\end{proof}

Thus semi-inverses of $r$ are unique precisely up to cosets of 
$\rad(R) : r$. In particular, semi-inverses of non-trivial semi-units
are never unique:

\begin{cor}
Let $R$ be a ring, $r \in R$ a semi-unit. Then $r$ has a unique semi-inverse 
iff $r$ is a unit and $\rad(R) = 0$.
\end{cor}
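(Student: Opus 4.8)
The plan is to read off the complete set of semi-inverses of $r$ from the immediately preceding proposition and thereby convert the uniqueness assertion into an ideal-theoretic condition. Fixing any one semi-inverse $s$ (which exists since $r$ is a semi-unit, so $\rho(r) = 1$), that proposition tells us that $s_1 - s_2 \in \rad(R) : r$ for any two semi-inverses $s_1, s_2$, and conversely that $s + a$ is a semi-inverse for every $a \in \rad(R) : r$. Hence the set of all semi-inverses of $r$ is precisely the coset $s + (\rad(R) : r)$. A coset of an ideal $J$ is a singleton if and only if $J = 0$, so $r$ has a unique semi-inverse if and only if $\rad(R) : r = 0$. This reduces the corollary to proving that $\rad(R) : r = 0$ if and only if $r$ is a unit and $\rad(R) = 0$.

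For the backward direction I would simply note that if $\rad(R) = 0$ then $\rad(R) : r = 0 : r$, and if $r$ is moreover a unit then it is a non-zero-divisor, so its annihilator $0 : r$ vanishes. For the forward direction I would first extract $\rad(R) = 0$ from the containment $\rad(R) \subseteq \rad(R) : r$ (valid because $\rad(R)$ is an ideal): the hypothesis $\rad(R) : r = 0$ forces $\rad(R) = 0$. With $\rad(R) = 0$ in hand, the hypothesis reads $0 : r = 0$, i.e. $r$ is a non-zero-divisor. The remaining step is to upgrade ``non-zero-divisor'' to ``unit'': since $r$ is a semi-unit, choose a semi-inverse $s$, so that $r(1 - sr) \in \rad(R) = 0$; then $1 - sr$ annihilates $r$ and hence must vanish, giving $sr = 1$, so $r$ is a unit.

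I expect this final upgrade to be the only genuinely substantive point. The translation from uniqueness to $\rad(R) : r = 0$, and the collapse $\rad(R) = 0$, are both formal. By contrast, a non-zero-divisor need not be a unit in general, so concluding invertibility really does require combining the vanishing of $\rad(R)$ with the existence of a semi-inverse — it is exactly here that the semi-unit hypothesis $\rho(r) = 1$ is used.
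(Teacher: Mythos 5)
Your proposal is correct and follows essentially the same route as the paper: both reduce uniqueness to $\rad(R) :_R r = 0$ via the preceding proposition, deduce $\rad(R) = 0$ and that $r$ is a nonzerodivisor, and then use the semi-inverse relation $r(1 - sr) \in \rad(R) = 0$ to conclude $sr = 1$. Your write-up is merely a bit more explicit than the paper's (e.g.\ spelling out the coset description of the set of semi-inverses and the containment $\rad(R) \subseteq \rad(R) :_R r$, which the paper leaves implicit), but the argument is the same.
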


\begin{proof}
$\Leftarrow$: if $r$ is a unit, then $\rad(R) : r = \rad(R) = 0$, so $r^{-1}$ is the 
only semi-inverse of $r$. $\Rightarrow$: if $r$ has a unique semi-inverse $s$, then 
$\rad(R) = 0$, and $r = sr^2$. But $0 = \rad(R) : r = 0 : r$, so $r$ is a 
nonzerodivisor, hence $1 = sr$, i.e. $r \in R^\times$.
\end{proof}

On the other hand, any semi-unit has a semi-inverse that is a unit. This follows 
from the following general decomposition theorem:

\begin{theorem} \label{decompThm}
Let $R$ be a ring, $r \in R$. Then $r$ is a semi-unit iff $r = ue + t$ for some 
$t \in \rad(R)$, $u \in R^\times$, and $e \in R$ a semi-unit with $1$ a 
semi-inverse of $e$ ($\iff \overline{e}$ idempotent in $R/\rad(R)$). 
In particular, $u^{-1}$ is a semi-inverse of $r$.
\end{theorem}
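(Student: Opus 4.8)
The plan is to reduce everything to the $J$-semisimple quotient $\overline{R} := R/\rad(R)$, where the three relevant notions---being a semi-unit, being a semi-inverse, and having $1$ as a semi-inverse---all translate into statements about idempotents, and then to transport the result back to $R$ using the standard fact that units lift along $R \twoheadrightarrow \overline{R}$. Throughout I write $\overline{\,\cdot\,}$ for images in $\overline{R}$.

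For the backward implication, suppose $r = ue + t$ as in the statement. Passing to $\overline{R}$ kills $t$, so $\overline{r} = \overline{u}\,\overline{e}$ with $\overline{u}$ a unit and $\overline{e}$ idempotent (the latter being exactly the condition that $1$ is a semi-inverse of $e$). I would then compute $\overline{u^{-1}}\,\overline{r} = \overline{e}$ and hence $\overline{r}(1 - \overline{u^{-1}}\,\overline{r}) = \overline{u}\,\overline{e}(1 - \overline{e}) = 0$, using $\overline{e}^2 = \overline{e}$. This says $r(1 - u^{-1}r) \in \rad(R)$, i.e.\ $u^{-1}$ is a semi-inverse of $r$, which simultaneously settles the final ``in particular'' clause. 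Since $e$ is a semi-unit, \Cref{rhoZero} gives $\overline{e} \ne 0$, whence $\overline{r} = \overline{u}\,\overline{e} \ne 0$ (as $\overline{u}$ is a unit), so $r \notin \rad(R)$ and therefore $\rho(r) = 1$.

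For the forward implication---the substantive direction---suppose $r$ is a semi-unit with semi-inverse $s$, so that $\overline{r} = \overline{s}\,\overline{r}^2$ in $\overline{R}$. I would set $e := sr$; then $\overline{e} = \overline{s}\,\overline{r}$ is idempotent, and $\overline{e} \ne 0$ (otherwise $\overline{r} = \overline{r}\,\overline{e} = 0$, contradicting $\rho(r)=1$), so $e$ is a semi-unit with $1$ a semi-inverse. The key step is to produce the unit: set $u := r + 1 - e$, so $\overline{u} = \overline{r} + 1 - \overline{e}$, and claim $\overline{u}$ is a unit of $\overline{R}$. Under the idempotent splitting $\overline{R} \cong \overline{R}\,\overline{e} \times \overline{R}(1 - \overline{e})$, the element $\overline{u}$ restricts to $\overline{r}\,\overline{e}$ on the first factor and to the identity $1 - \overline{e}$ on the second; since $\overline{s}\,\overline{r} = \overline{e}$, the element $\overline{s}\,\overline{e}$ is an inverse of $\overline{r}\,\overline{e}$ in $\overline{R}\,\overline{e}$, so $\overline{u}$ is invertible (concretely, one checks directly that $\overline{s}\,\overline{e} + (1 - \overline{e})$ is its inverse).

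Finally, because $\overline{u}$ is a unit in $\overline{R} = R/\rad(R)$, and an element of $R$ lying outside every maximal ideal cannot be a nonunit, the lift $u$ is itself a unit of $R$; this is the one place where the Jacobson radical is essential. With $u \in R^\times$ and $e$ as above we have $\overline{ue} = \overline{u}\,\overline{e} = \overline{r}$, so $t := r - ue \in \rad(R)$ yields the decomposition $r = ue + t$. The main obstacle I anticipate is precisely the verification that $\overline{u}$ is a unit---that is, recognizing $\overline{r} + 1 - \overline{e}$ as the element that agrees with the invertible $\overline{r}\,\overline{e}$ on the $\overline{e}$-component and with the identity off it; everything else is formal manipulation modulo $\rad(R)$ together with the unit-lifting property.
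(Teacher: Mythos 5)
Your proof is correct and follows essentially the same route as the paper: the same idempotent $\overline{e} = \overline{s}\,\overline{r}$, the same unit (your $\overline{r} + 1 - \overline{e}$ equals the paper's $\overline{r}\,\overline{e} + (1 - \overline{e})$ since $\overline{r}\,\overline{e} = \overline{r}$), the same inverse $\overline{s}\,\overline{e} + (1 - \overline{e})$, and the same unit-lifting along $R \twoheadrightarrow R/\rad(R)$. Your phrasing via the idempotent splitting and your explicit check that $\rho(r) \ne 0$ in the backward direction are minor (and slightly more careful) repackagings of the paper's argument.
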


\begin{proof}
Passing to $R/\rad(R)$, it suffices to show that $\overline{r}$ is a product of a 
unit and an idempotent. Let $s$ be a semi-inverse of $r$, so $\overline{r} = 
\overline{s} \overline{r}^2$. Set $\overline{e} := \overline{r} \overline{s}$. Then
$\overline{e}^2 = \overline{e}$, so if $e$ is any lift of $\overline{e}$, then $e$
is a semi-unit in $R$ with $1$ as a semi-inverse. Notice also that $\overline{r}
= \overline{r} \overline{e}$. 

Next, set $\overline{u} := \overline{r} \overline{e} + (1 - \overline{e})$. Then
$\overline{u} \overline{e} = \overline{r} \overline{e}^2 + (1 - \overline{e}) 
\overline{e} = \overline{r}$. Furthermore, 
\begin{align*}
\overline{u} \cdot (\overline{s} \overline{e} + (1 - \overline{e})) &= 
(\overline{r} \overline{e} + (1 - \overline{e})) \cdot
(\overline{s} \overline{e} + (1 - \overline{e})) \\
&= \overline{r} \overline{s} \overline{e}^2 + (1 - \overline{e})^2 \\
&= \overline{e}^3 + (1 - \overline{e}) \\
&= 1
\end{align*}
so $\overline{u}$ is a unit. Lifting to $R$ gives a unit $u \in R$, such that 
$t := r - ue \in \rad(R)$. 

Finally, notice that $r(1 - u^{-1}r) = (ue + t)(1 - u^{-1}(ue + t)) = ue(1 - e) + 
t(1 - 2e - u^{-1}t) \in \rad(R)$, so $u^{-1}$ is a semi-inverse of $r$.
\end{proof}

\section{Semi-fields}

Having described the structure of semi-units, we now focus on 
the rings that have as many semi-units as possible, starting with the 
following criterion:

\begin{prop} \label{semiFieldCharAlg}
Let $R$ be a ring. Then the following are equivalent:

i) $R$ is a semi-field

ii) $R/\rad(R)$ is von Neumann regular

iii) $\dim R/\rad(R) = 0$.
\end{prop}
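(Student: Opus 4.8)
The plan is to work entirely inside the $J$-semisimple ring $A := R/\rad(R)$, proving (i) $\iff$ (ii) by unwinding definitions and (ii) $\iff$ (iii) by a short dimension-theoretic argument. For (i) $\iff$ (ii), I would use \Cref{rhoValues} and \Cref{rhoZero}: $R$ is a semi-field precisely when every $r \in R$ has $\rho(r) \in \{0,1\}$, i.e.\ each $r$ either lies in $\rad(R)$ or admits a semi-inverse $s$ with $r(1-sr) \in \rad(R)$. Passing to $A$, the first case reads $\overline{r} = \overline{0}$ and the second reads $\overline{r} = \overline{s}\,\overline{r}^2$; in either case there is some $\overline{s}$ with $\overline{r} = \overline{r}^2\overline{s}$, and conversely any such relation in $A$ lifts to exhibit the singleton $\{s\}$ as a semi-inverse set, so $\rho(r) \le 1$. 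Thus $R$ is a semi-field iff every element of $A$ satisfies the defining identity $\overline{r} = \overline{r}^2\overline{s}$ of a commutative von Neumann regular ring, giving (i) $\iff$ (ii) essentially for free.

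For (ii) $\implies$ (iii), I would invoke the standard fact that a von Neumann regular ring has Krull dimension $0$: localizing at any prime $\p$ yields a local von Neumann regular ring $A_\p$, and in such a ring every $a \in \p A_\p$ satisfies $a(1 - ax) = 0$ with $1 - ax$ a unit, forcing $a = 0$. Hence $A_\p$ is a field, every prime of $A$ is maximal, and $\dim A = 0$.

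For (iii) $\implies$ (ii), the key observation is that $\rad(A) = 0$ automatically, since the Jacobson radical of $R/\rad(R)$ vanishes. When $\dim A = 0$ every prime is simultaneously minimal and maximal, so $\Spec A = \mSpec A$ and therefore $\nil(A) = \bigcap_{\p \in \Spec A} \p = \bigcap_{\m \in \mSpec A} \m = \rad(A) = 0$; thus $A$ is reduced as well as $0$-dimensional. To conclude von Neumann regularity I would use the local criterion that $A$ is von Neumann regular iff $A_\m$ is a field for every maximal $\m$, which follows by checking that the module $(a)/(a^2)$ localizes to $0$ at each maximal ideal. Since each $A_\m$ is a reduced local ring of dimension $0$, its maximal ideal coincides with its nilradical and hence vanishes, so $A_\m$ is a field and $A$ is von Neumann regular.

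I expect the main obstacle to be the implication (iii) $\implies$ (ii), since dimension $0$ alone does not give von Neumann regularity---reducedness is essential (e.g.\ $k[x]/(x^2)$). The crux is recognizing that for the specific ring $A = R/\rad(R)$ reducedness comes for free in dimension $0$, because the nil and Jacobson radicals coincide once every prime is maximal; the remaining passage from ``reduced and $0$-dimensional'' to von Neumann regular is then handled cleanly by the local-field characterization, with no Noetherian or finiteness hypothesis required.
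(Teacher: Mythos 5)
Your proposal is correct and takes essentially the same route as the paper: the (i) $\iff$ (ii) equivalence is the identical definition-unwinding modulo $\rad(R)$, and for (ii) $\iff$ (iii) the paper simply cites the standard fact that a reduced ring is von Neumann regular iff zero-dimensional, which you spell out in full via localization at primes and maximal ideals. The only cosmetic difference is that the paper observes $R/\rad(R)$ is \emph{always} reduced (since $\rad(R)$ is an intersection of prime ideals), whereas you derive reducedness under hypothesis (iii) from $\nil = \rad$ in dimension zero together with $\rad(R/\rad(R)) = 0$; both are valid.
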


\begin{proof}
$R$ is a semi-field $\iff$ for every $r \in R$, there exists $s \in R$ with $r(1 - sr)
\in \rad(R) \iff$ for every $\overline{r} \in R/\rad(R)$, there exists $\overline{s}
\in R/\rad(R)$ with $\overline{r} = \overline{s} \cdot \overline{r}^2 \iff R/\rad(R)$
is von Neumann regular. Since $R/\rad(R)$ is always reduced, this happens
iff $\dim R/\rad(R) = 0$.
\end{proof}

A geometric reformulation of the semi-field property is given by:

\begin{prop} \label{semiFieldCharGeom}
Let $R$ be a ring. Then $R$ is a semi-field iff $\mSpec R$ is closed in $\Spec R$.
\end{prop}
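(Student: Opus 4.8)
The plan is to use the algebraic characterization just established in \Cref{semiFieldCharAlg}, namely that $R$ is a semi-field iff $\dim R/\rad(R) = 0$, and reduce the problem to a purely topological statement about $\Spec R$. First I would observe that passing to $R/\rad(R)$ does not change the space $\mSpec R$ as a subspace of $\Spec R$: since $\rad(R)$ is contained in every maximal ideal, the canonical surjection $R \twoheadrightarrow R/\rad(R)$ induces a homeomorphism $\Spec(R/\rad(R)) \cong V(\rad(R)) \subseteq \Spec R$ that restricts to a homeomorphism on maximal spectra, and the locus $V(\rad(R))$ contains $\mSpec R$. Thus $\mSpec R$ is closed in $\Spec R$ iff $\mSpec(R/\rad(R))$ is closed in $\Spec(R/\rad(R))$, so I may assume from the start that $R$ is reduced with $\rad(R) = 0$.

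With that reduction, the claim becomes: a $J$-semisimple ring $R$ has $\dim R = 0$ iff $\mSpec R = \Spec R$ is all of $\Spec R$ (equivalently, every prime is maximal), and I would argue that in the $J$-semisimple setting ``$\mSpec R$ closed'' forces ``$\mSpec R = \Spec R$.'' The forward direction is easy: if $\dim R/\rad(R) = 0$ then in $R/\rad(R)$ every prime is maximal, so $\mSpec(R/\rad(R)) = \Spec(R/\rad(R))$ is trivially closed, and by the reduction $\mSpec R$ is closed in $\Spec R$. For the converse I would assume $\mSpec R$ is closed and show $\dim R/\rad(R) = 0$. After reducing to $\rad(R) = 0$, suppose $\mSpec R$ is closed, so $\mSpec R = V(J)$ for some ideal $J$. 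Since $\mSpec R$ is dense in $\Spec R$ (every prime is contained in a maximal ideal, so the closure of $\mSpec R$ is all of $\Spec R$), a closed set containing a dense set must be the whole space, giving $\mSpec R = \Spec R$; hence every prime is maximal and $\dim R = 0$.

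The key step, and the one I expect to be the main obstacle, is the density claim: $\overline{\mSpec R} = \Spec R$ in the Zariski topology. The closure of $\mSpec R$ is $V\bigl(\bigcap_{\m \in \mSpec R} \m\bigr) = V(\rad(R))$, so in the $J$-semisimple case $\rad(R) = 0$ this closure is $V(0) = \Spec R$, confirming density. Combining density with closedness immediately yields $\mSpec R = \Spec R$. I would then phrase the whole argument to avoid the reduction if cleaner: directly, $\mSpec R$ closed means $\mSpec R = \overline{\mSpec R} = V(\rad(R))$, and since $\mSpec R \subseteq V(\rad(R))$ always holds with equality precisely when every prime containing $\rad(R)$ is maximal, this is exactly the condition that $R/\rad(R)$ has dimension $0$. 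Thus $\mSpec R$ closed $\iff \mSpec R = V(\rad(R)) \iff \dim R/\rad(R) = 0 \iff R$ is a semi-field by \Cref{semiFieldCharAlg}.
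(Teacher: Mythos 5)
Your proposal is correct and takes essentially the same route as the paper: the heart of both arguments is the computation $\overline{\mSpec R} = V(\rad(R))$, from which $\mSpec R$ closed $\iff \mSpec R = V(\rad(R)) \iff \dim R/\rad(R) = 0 \iff R$ is a semi-field by \Cref{semiFieldCharAlg}. Your intermediate reduction to the $J$-semisimple case and the density argument are just a repackaging of that same closure computation, and your final ``direct'' paragraph is in outline exactly the paper's proof.
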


\begin{proof}
First, note that the closure of $\mSpec R$ is equal to $V(\rad R)$: for any 
$\p \in \Spec R$, $\p$ is in $\overline{\mSpec R} \iff$ for all 
$f \in R$ with $p \in D(f)$, there exists $\m \in \mSpec R$ with $\m \in D(f) 
\iff R - \p \subseteq \bigcup_{\m \in \mSpec R} (R - \m) \iff \p \supseteq 
\bigcap_{\m \in \mSpec R} \m = \rad R$.

Thus, $\mSpec R = \overline{\mSpec R}$ iff $\mSpec R = V(\rad R)$ iff $\dim
R/\rad(R) = 0$, so the conclusion follows from \Cref{semiFieldCharAlg}.
\end{proof}

\begin{cor} \label{noethSemifield}
The following are equivalent for a ring $R$:

i) $R$ is semilocal

ii) $R/\rad(R)$ is Artinian

iii) $R$ is a semi-field and $|\Min(\rad(R))| < \infty$

\noindent
(here $\Min(\cdot)$ denotes the set of minimal primes).
\end{cor}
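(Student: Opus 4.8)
The plan is to prove the three-way equivalence by establishing a cycle, using the earlier characterizations of the semi-field property (Propositions~\ref{semiFieldCharAlg} and~\ref{semiFieldCharGeom}) to handle the bulk of the work, so that the new content reduces to facts about when a reduced zero-dimensional ring is Artinian. The key observation is that \Cref{semiFieldCharAlg} already tells us $R$ is a semi-field iff $\dim R/\rad(R) = 0$, so conditions (i) and (iii) differ from the semi-field property only in finiteness data, and I would organize the argument around that reduction. Throughout I would pass to $\overline{R} := R/\rad(R)$, which is reduced with $\rad(\overline{R}) = 0$, and note that $\mSpec R$, $\Spec R/\rad(R)$, and $\Min(\rad(R))$ are all controlled by $\overline{R}$.

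First I would show (i) $\Rightarrow$ (ii): if $R$ is semilocal, then $\overline{R}$ has finitely many maximal ideals and $\rad(\overline{R}) = 0$, so $\overline{R}$ embeds into the finite product of its (field) localizations at its maximal ideals; since a finite product of fields is Artinian, $\overline{R}$ is Artinian. Next, (ii) $\Rightarrow$ (iii): an Artinian ring has dimension $0$, so $\dim R/\rad(R) = 0$ and $R$ is a semi-field by \Cref{semiFieldCharAlg}; moreover an Artinian ring has only finitely many minimal primes (indeed finitely many primes altogether), giving $|\Min(\rad(R))| < \infty$. Finally, for (iii) $\Rightarrow$ (i): since $R$ is a semi-field, $\dim R/\rad(R) = 0$, so in $\overline{R}$ every prime is simultaneously minimal and maximal; hence $\mSpec R$ is in bijection with $\Min(\rad(R))$, which is assumed finite, so $R$ is semilocal.

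The step I expect to carry the most weight is the bijection $\mSpec R \leftrightarrow \Min(\rad(R))$ used in (iii) $\Rightarrow$ (i), together with the implication (ii) $\Rightarrow$ (iii) that an Artinian ring has finitely many minimal primes. For the former, the point is that in a zero-dimensional reduced ring every prime is both minimal and maximal, so the minimal primes of $\rad(R)$ (equivalently, the primes of $\overline{R}$) coincide with the maximal ideals of $R$; I would want to state this carefully as primes of $R$ containing $\rad(R)$ correspond to primes of $\overline{R}$, and among those the maximal and minimal ones agree under $\dim\overline{R} = 0$. For the latter, I would invoke the standard structure theorem that an Artinian ring is a finite product of Artinian local rings, each of which has a unique prime; this immediately bounds both $|\mSpec|$ and $|\Min|$. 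The main obstacle is thus purely one of bookkeeping between the three spectral finiteness conditions, and I anticipate no deep difficulty once the reduction to $\overline{R}$ and the zero-dimensional reduced structure are in place.
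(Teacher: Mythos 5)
Your overall strategy is sound, and your cycle (i)$\Rightarrow$(ii)$\Rightarrow$(iii)$\Rightarrow$(i) runs in the opposite direction from the paper's (iii)$\Rightarrow$(ii)$\Rightarrow$(i)$\Rightarrow$(iii); in particular your (iii)$\Rightarrow$(i), which directly identifies $\mSpec R$ with $\Min(\rad(R))$ via zero-dimensionality of $\overline{R} := R/\rad(R)$ (using \Cref{semiFieldCharAlg}), is more elementary than the paper's appeal to the fact that a von Neumann regular ring with finite spectrum is Noetherian. However, your step (i)$\Rightarrow$(ii) contains a genuine gap: you conclude that $\overline{R}$ is Artinian because it \emph{embeds} into a finite product of fields, and this inference is invalid, since a subring of an Artinian ring need not be Artinian (e.g. $\Z \subseteq \mathbb{Q}$). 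Moreover, your parenthetical claim that the localizations of $\overline{R}$ at its maximal ideals are fields is asserted without proof and is essentially as strong as the zero-dimensionality being established; it requires an argument such as: any prime $\p \subseteq \m_i$ contains $\bigcap_j \m_j = \rad(\overline{R}) = 0$, hence contains, and thus equals, some maximal ideal $\m_j$, forcing $\p = \m_i$.

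Both problems are repaired at once by the route the paper takes for its (i)$\Rightarrow$(iii): the finitely many maximal ideals $\m_1, \ldots, \m_n$ of $\overline{R}$ are pairwise comaximal, so by the Chinese Remainder Theorem the natural map $\overline{R} \to \prod_{i=1}^n \overline{R}/\m_i$ is \emph{surjective}, and its kernel is $\bigcap_i \m_i = \rad(\overline{R}) = 0$; hence $\overline{R}$ is \emph{isomorphic to} (not merely embedded in) a finite product of fields, and in particular Artinian. With that substitution your remaining implications stand as written: (ii)$\Rightarrow$(iii) follows since an Artinian ring has dimension $0$ (so $R$ is a semi-field by \Cref{semiFieldCharAlg}) and has only finitely many primes (so $|\Min(\rad(R))| < \infty$), and (iii)$\Rightarrow$(i) follows from the coincidence of minimal and maximal primes in the reduced zero-dimensional ring $\overline{R}$.
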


\begin{proof}
iii) $\implies$ ii): If $R$ is a semi-field with $\Min(R/\rad(R)) = 
\Spec(R/\rad(R))$ finite, then $R/\rad(R)$ is a von Neumann regular
ring with finite spectrum, hence is Noetherian.

ii) $\implies$ i): An Artinian ring is semilocal, and $R/\rad(R)$ semilocal
$\implies R$ semilocal.

i) $\implies$ iii): If $R$ is semilocal, then by Chinese Remainder
$R/\rad(R)$ is a finite direct product of fields.
\end{proof}

We give two ways to produce new semi-fields:

\begin{prop} \label{prodQuot}
The class of semi-fields is closed under quotients and products.
\end{prop}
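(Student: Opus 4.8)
The plan is to bypass the definition of semi-field entirely and work through the characterization in \Cref{semiFieldCharAlg}: a ring $A$ is a semi-field iff $\dim A/\rad(A) = 0$, equivalently iff $A/\rad(A)$ is von Neumann regular. Since both ``being $0$-dimensional'' and ``being von Neumann regular'' are well-behaved under the relevant operations, the task reduces to understanding how $\rad$ interacts with products and quotients.

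For products, I would invoke \Cref{radOfProd} directly. If $R = \prod_i R_i$ with each $R_i$ a semi-field, then $\rad(R) = \prod_i \rad(R_i)$, so $R/\rad(R) \cong \prod_i \big(R_i/\rad(R_i)\big)$. Each factor is von Neumann regular by hypothesis and \Cref{semiFieldCharAlg}, and von Neumann regularity is a componentwise condition (the equation $\overline{r} = \overline{s}\,\overline{r}^2$ can be solved coordinate by coordinate), so an arbitrary product of von Neumann regular rings is again von Neumann regular. Hence $R/\rad(R)$ is von Neumann regular and $R$ is a semi-field. One could equally argue straight from the definition: given $r = (r_i)$, choose for each $i$ a semi-inverse $s_i$ of $r_i$ when $r_i \notin \rad(R_i)$ and $s_i = 0$ otherwise, so that $s = (s_i)$ satisfies $r(1 - sr) \in \prod_i \rad(R_i) = \rad(R)$.

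For quotients, the subtlety is that $\rad$ does not commute with passage to a quotient: in general $(\rad(R) + I)/I$ is only \emph{contained} in $\rad(R/I)$, not equal to it. This containment is nonetheless enough. Since the image of $\rad(R)$ under $R \twoheadrightarrow R/I$ lands in $\rad(R/I)$, the composite $R \twoheadrightarrow (R/I)/\rad(R/I)$ kills $\rad(R)$ and therefore factors as a surjection $R/\rad(R) \twoheadrightarrow (R/I)/\rad(R/I)$. Krull dimension cannot increase along a surjection (the induced map on spectra is an order-preserving injection onto the closed subset $V(I)$), so $\dim R/\rad(R) = 0$ forces $\dim (R/I)/\rad(R/I) = 0$, and \Cref{semiFieldCharAlg} gives that $R/I$ is a semi-field.

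I expect the quotient case to be the only real obstacle, precisely because $\rad(R/I)$ can be strictly larger than the image of $\rad(R)$; the resolution is to avoid claiming an isomorphism of $J$-semisimple reductions and instead use only the surjection $R/\rad(R) \twoheadrightarrow (R/I)/\rad(R/I)$ together with the monotonicity of dimension. The product case is essentially formal once \Cref{radOfProd} is in hand.
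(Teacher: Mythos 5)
Your proof is correct and follows essentially the same route as the paper: for products, \Cref{radOfProd} plus the fact that a product of von Neumann regular rings is von Neumann regular, and for quotients, the containment $p(\rad(R)) \subseteq \rad(R/I)$ yielding a surjection $R/\rad(R) \twoheadrightarrow (R/I)/\rad(R/I)$, along which $\dim = 0$ is preserved. Your explicit remark that one must not claim equality of $\rad(R/I)$ with the image of $\rad(R)$ is exactly the point the paper's factorization through $R/(\rad(R)+I)$ handles implicitly.
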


\begin{proof}
Let $R$ be a semi-field, and $I$ an $R$-ideal. The surjection $p : R \to 
R/I$ sends $p(\rad(R)) \subseteq \rad(R/I)$, so $(R/I)/\rad(R/I)$ is a quotient 
of $R/(\rad(R) + I)$, which is itself a quotient of $R/\rad(R)$. Thus 
$\dim R/\rad(R) = 0$ implies $\dim (R/I)/\rad(R/I) = 0$.

If now $R_i$ are semi-fields, then by \Cref{radOfProd}
\[(\prod_i R_i)/\rad(\prod_i R_i) = (\prod_i R_i)/(\prod_i \rad(R_i)) = 
\prod_i R_i/\rad(R_i)\] is a product of von Neumann regular rings, hence is 
von Neumann regular.
\end{proof}

\begin{rem}
Geometrically, the first part of \Cref{prodQuot} says that the semi-field 
property passes to closed subschemes. However, the semi-field 
property does not pass to open subschemes -- e.g. if $R$ is any
Noetherian ring, $x \in \rad(R)$ but $x$ is not contained in any minimal 
prime of $R$, then $\dim R_x = \dim R - 1$, and if $\dim R < \infty$, then
$\rad(R_x) = \nil(R_x)$. Thus any Noetherian local domain $(R, \m)$ of 
dimension $\ge 2$ and $0 \ne x \in \m$ gives an example where $R$ is a 
semi-field (being local), but $R_x$ is not.
\end{rem}

Even in light of \Cref{prodQuot}, it is still reasonable to ask for nontrivial
examples of semi-fields. One trivial reason for being a semi-field is that the 
set of closed points is finite, and \Cref{noethSemifield} guarantees that this is the 
only possibility in the Noetherian case -- thus, one must search among 
non-Noetherian rings for a nontrivial example. 

Now one can easily form non-Noetherian rings by taking infinite products.
However, products are an arguably trivial way to construct examples -- 
for finite products, the geometric intuition is that the property of the closed 
points forming a closed set should pass to disjoint unions. This intuition fails 
for general von Neumann regular rings though, since not every von Neumann 
regular ring is a product of fields: e.g. if $k$ is a finite field, then the subring 
of $\prod_{i \in \N} k$ consisting of eventually constant sequences is 
non-Noetherian and countable, whereas any product of fields is either 
Noetherian or uncountable. Despite this, von Neumann regular rings are
trivially semi-fields for the same reason any zero-dimensional ring is: the 
set of closed points is certainly closed if every point is closed!

Nevertheless, there are indeed less trivial examples of semi-fields, which arise
formally in a manner similar to Hilbert's basis theorem and (a general form of)
the Nullstellensatz, which say that the Noetherian and Jacobson properties pass 
to rings of finite type. To emphasize the analogy, for a ring $R$, we say that 
a ring is of semi-finite type over $R$ if it is of the form $R[[x_1, \ldots, x_n]]/I$.

\begin{prop}
Let $R$ be a semi-field. Then any ring of semi-finite type over $R$ is a semi-field.
\end{prop}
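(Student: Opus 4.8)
The plan is to reduce to the one-variable power series case and then compute the Jacobson radical explicitly. By \Cref{prodQuot}, the class of semi-fields is closed under quotients, so a ring $R[[x_1, \ldots, x_n]]/I$ is a semi-field as soon as $R[[x_1, \ldots, x_n]]$ is. Iterating the isomorphism $R[[x_1, \ldots, x_n]] \cong R[[x_1, \ldots, x_{n-1}]][[x_n]]$, it suffices by induction on $n$ to prove the single statement: if $R$ is a semi-field, then so is $R[[x]]$.

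To do this, I would use the algebraic criterion of \Cref{semiFieldCharAlg}, reducing the goal to showing $\dim R[[x]]/\rad(R[[x]]) = 0$. The key point is to identify $\rad(R[[x]])$. First I would record the standard fact that $f = \sum_i a_i x^i \in R[[x]]$ is a unit iff $a_0 \in R^\times$; in particular every element of the form $1 - xg$ is a unit, so $x \in \rad(R[[x]])$, whence $(x) \subseteq \rad(R[[x]])$. Consequently every maximal ideal of $R[[x]]$ contains $x$, and therefore corresponds, under the quotient map $\pi : R[[x]] \twoheadrightarrow R[[x]]/(x) \cong R$, to a maximal ideal of $R$ (conversely $\pi^{-1}(\mathfrak{n})$ is maximal for each maximal $\mathfrak{n}$ of $R$, since the quotient is the field $R/\mathfrak{n}$). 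This yields a bijection $\mSpec R[[x]] \leftrightarrow \mSpec R$ and hence $\rad(R[[x]]) = \pi^{-1}(\rad R)$.

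With this in hand the conclusion is immediate: composing $\pi$ with $R \twoheadrightarrow R/\rad(R)$ gives a surjection $R[[x]] \twoheadrightarrow R/\rad(R)$ whose kernel is exactly $\pi^{-1}(\rad R) = \rad(R[[x]])$, so $R[[x]]/\rad(R[[x]]) \cong R/\rad(R)$. Since $R$ is a semi-field, the right-hand side is $0$-dimensional, whence $R[[x]]$ is a semi-field by \Cref{semiFieldCharAlg}. I expect the only genuine obstacle to be the radical computation -- namely verifying that $x \in \rad(R[[x]])$ and that maximal ideals of $R[[x]]$ contract to maximal ideals of $R$; once $\rad(R[[x]]) = \pi^{-1}(\rad R)$ is established, the reduction to $R/\rad(R)$ and the dimension count are formal. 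This is exactly the analogy advertised before the statement: just as the Noetherian and Jacobson properties pass to finite-type extensions, the semi-field property passes to power series extensions, because the new variables are absorbed into the Jacobson radical.
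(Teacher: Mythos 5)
Your proposal is correct and is essentially the paper's own argument: the paper likewise reduces via \Cref{prodQuot} and induction to the case of $R[[x]]$, observes that $x \in \rad(R[[x]])$ forces every maximal ideal to have the form $\m R[[x]] + (x)$ for $\m$ maximal in $R$, and concludes $R[[x]]/\rad(R[[x]]) \cong R/\rad(R)$. You simply spell out the details (the unit criterion for power series and the bijection on maximal spectra) that the paper leaves implicit.
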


\begin{proof}
By \Cref{prodQuot}, it suffices to show $R$ semi-field
$\implies R[[x_1, \ldots, x_n]]$ semi-field, and by induction it is enough to
do the base case $n = 1$. This follows immediately from the fact that 
$x \in \rad(R[[x]])$, which in turn implies that every maximal ideal of $R[[x]]$ is
of the form $\m R[[x]] + (x)$ for a (uniquely determined) maximal ideal $\m$ of 
$R$, so $R/\rad(R) \cong R[[x]]/\rad(R[[x]])$.
\end{proof}

\section{Property (*) revisited}

We finally return to the original surjectivity question. \Cref{semiFieldCharAlg} shows
that every example given earlier of a ring with $(*)$ has been a semi-field. 
The following theorem gives the general phenomenon:

\begin{theorem} \label{semifieldMainThm}
Let $R$ be a semi-field. Then $R$ has $(*)$.
\end{theorem}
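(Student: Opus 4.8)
The plan is to verify property $(*)$ using the characterization in \Cref{firstChar}(iv): given $a \in R$ with $1 - ab \in I$ for some $b \in R$, I must produce a unit $u \in R^\times$ with $1 - au \in I$. The hypothesis that $1 - ab \in I$ for some $b$ means precisely that the image $\overline{a}$ of $a$ in $R/I$ is a unit. My goal is to replace the (possibly non-unit) element $b$ by an honest unit $u$ of $R$ that still serves as an inverse of $\overline{a}$ modulo $I$.

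First I would observe that since $R$ is a semi-field, so is every quotient $R/I$ by \Cref{prodQuot}; equivalently, $\dim (R/I)/\rad(R/I) = 0$ by \Cref{semiFieldCharAlg}. The real leverage, however, comes from applying the decomposition \Cref{decompThm} to the element $a$ itself. Because $R$ is a semi-field, every element is either a semi-unit or in $\rad(R)$; and the hypothesis $1 - ab \in I$ forces $a$ to be a semi-unit whenever $I \ne R$ (an element of $\rad(R)$ cannot become a unit in any proper quotient). Thus \Cref{decompThm} gives $a = ue + t$ with $u \in R^\times$, $t \in \rad(R)$, and $e$ a lift of an idempotent $\overline{e} \in R/\rad(R)$, and moreover $u^{-1}$ is a semi-inverse of $a$, i.e. $a(1 - u^{-1}a) \in \rad(R)$.

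The key step is then to exploit the idempotent structure to pin down $\overline{e}$ modulo $I$. Working in $R/I$, the image of $a$ is a unit, while $a = ue + t$ with $t \in \rad(R)$; since $\rad(R)$ maps into $\rad(R/I)$, and a unit times an idempotent can only be a unit if that idempotent is (a lift of) $1$, I expect $\overline{e} \equiv 1$ modulo $\rad(R/I) + I$. This should force $a \equiv u \pmod{\rad(R) + I}$, after which the unit $u$ of $R$ becomes a candidate inverse datum. The remaining work is to correct $u$ by an element of $R^\times$ (using that $R^\times + \rad(R) = R^\times$) so that $1 - au$ lands in $I$ on the nose; here the relation $a(1 - u^{-1}a) \in \rad(R)$ from \Cref{decompThm}, combined with $R^\times + \rad(R) \subseteq R^\times$, is what lets me absorb the radical error.

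The main obstacle I anticipate is bookkeeping the interaction between the two ideals $\rad(R)$ and $I$: the decomposition theorem controls $a$ modulo $\rad(R)$, but property $(*)$ is a statement modulo $I$, and these need not be comparable. I would therefore expect the cleanest route to be a reduction to the $J$-semisimple case via \Cref{reduceToRad}, replacing $R$ by $R/\rad(R)$ — which is von Neumann regular by \Cref{semiFieldCharAlg} — so that $\rad$ vanishes and the semi-inverse relation $a = \overline{s}\,\overline{a}^2$ becomes an exact equation. In that setting every semi-unit is literally a unit times an idempotent, the error terms disappear, and the verification of \Cref{firstChar}(iv) should reduce to the elementary manipulation already seen in \Cref{prodCase}(i) for products of fields.
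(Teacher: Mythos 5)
Your final plan coincides with the paper's own proof: reduce to $R/\rad(R)$ via \Cref{reduceToRad}, note that this quotient is von Neumann regular by \Cref{semiFieldCharAlg}, extract from \Cref{decompThm} a unit $u \in R^\times$ with $a = a^2u$ (the semi-inverse relation becoming exact since $\rad = 0$), and verify \Cref{firstChar}(iv). The only divergence is the endgame, which you leave as an ``elementary manipulation'': the paper finishes by using that every ideal of a von Neumann regular ring is radical, writing $I = \bigcap_i p_i$ for primes $p_i$, and noting that $a(1-au) = 0 \in p_i$ together with $a \notin p_i$ forces $1 - au \in p_i$ for all $i$. Your route in fact closes more directly, with no mention of primes: from $a = a^2u$ one computes $(1-au)(1-ab) = 1 - ab - au + a^2ub = 1 - au$, so $1 - ab \in I$ gives $1 - au \in I$ on the nose. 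One caveat: the literal manipulation of \Cref{prodCase}(i) does not transfer, since a general von Neumann regular ring has no coordinates in which to form indicator vectors; the idempotent $au$ must play the role that $e_J$ played there, which is exactly what the identity above implements, so your appeal to that example is morally right but needs this one-line substitute. Your middle paragraphs (pinning down $\overline{e} \equiv 1$ and correcting $u$ by hand) contain a genuine subtlety you should be aware of even though you rightly abandon them: $\rad(R/I)$ can strictly contain $(\rad(R)+I)/I$, so the claimed congruence $a \equiv u \pmod{\rad(R)+I}$ is not justified as stated---the bookkeeping between $\rad(R)$ and $I$ that you anticipated as the main obstacle is precisely what \Cref{reduceToRad} exists to dispose of, and your decision to route through it is what makes the proof go through. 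Your verification that $a$ is a semi-unit (a unit of $R/I \ne 0$ cannot lie in $\rad(R/I)$, and $p(\rad(R)) \subseteq \rad(R/I)$) is correct, and after the reduction it is automatic.
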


\begin{proof}
By \Cref{reduceToRad} we may pass to $R/\rad(R)$, so by \Cref{semiFieldCharAlg} it suffices 
to show any von Neumann regular ring $R$ has $(*)$. For this we use 
\Cref{firstChar}(iv). Let $I \ne R$ be an ideal, and $a \in R$ such that $1 - ab \in I$
for some $b \in R$. As $R$ is von Neumann regular, $I$ is a radical ideal,
so $I = \bigcap_i p_i$ for some primes $p_i \in \Spec R$. Then $1 - ab \in p_i$
implies $a \not \in p_i$, for all $i$.
Now $a$ is a semi-unit, so by \Cref{decompThm}, $a$ has a semi-inverse which is a unit,
i.e. there exists $u \in R^\times$ with $a = a^2u$. Then $a(1 - au) = 0 \in p_i$ for all $i$, 
so $1 - au \in p_i$ for all $i$, hence $1 - au \in I$.
\end{proof}

\begin{rem}
\Cref{noethSemifield}, \Cref{prodQuot}, and \Cref{semifieldMainThm} give an alternate proof 
of \Cref{prodCase}, that an arbitrary product of semilocal rings has $(*)$. We do not know if 
the class of rings with $(*)$ is closed under arbitrary products.
\end{rem}

\Cref{semifieldMainThm} thus generalizes and gives a uniform proof of all the previous 
sufficient conditions for a ring to have $(*)$: \Cref{radCase}, \Cref{semiCase}, and 
\Cref{prodCase}. 





We conclude with an application and a generalization.
Although the motivation in determining when an ideal or ring has $(*)$ has
been mostly intrinsic, one possible application of 
these results is in constructing rings with trivial unit group. 

\begin{prop} \label{trivialUnits}
Let $X \subseteq \mathbb{P}^n_{\mathbb{F}_2}$ be a reduced 
projective scheme. Then the homogeneous coordinate ring of $X$ 
has trivial unit group.
\end{prop}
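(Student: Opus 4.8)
The plan is to realize $R$ as a reduced $\Z_{\ge 0}$-graded $\mathbb{F}_2$-algebra and show that all of its units sit in degree $0$. Write $R = S/I$, where $S = \mathbb{F}_2[x_0, \ldots, x_n]$ and $I \subseteq S_+$ is the homogeneous ideal cutting out $X$; since $X$ is reduced, $I$ is radical, hence $R$ is reduced, and as $I$ is a proper homogeneous ideal we have $R_0 = S_0 = \mathbb{F}_2$. Because $\mathbb{F}_2^\times = \{1\}$, it suffices to prove $R^\times \subseteq R_0$, for then $R^\times = (R_0)^\times = \mathbb{F}_2^\times = \{1\}$.

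First I would recall the basic fact (already used in the proof of the graded proposition above) that a positively graded \emph{domain} $D = \bigoplus_{i \ge 0} D_i$ satisfies $D^\times \subseteq D_0$: comparing lowest and highest degrees in an equation $uv = 1$, and using that $D$ is a domain, forces $u$ and $v$ to be homogeneous of degree $0$. The reduced but possibly reducible case is then handled by gluing this fact over the irreducible components. Since $0$ is a homogeneous ideal of $R$, the minimal primes $\p \in \Min(R)$ are homogeneous, so each quotient $R/\p$ is a positively graded domain.

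Now take $u \in R^\times$ and write $u = \sum_i u_i$ in homogeneous components. For each $\p \in \Min(R)$ the image of $u$ in $R/\p$ is a unit, hence lies in degree $0$ by the previous step; this says $u_i \in \p$ for every $i \ge 1$, i.e. $u - u_0 \in \p$. Because $R$ is reduced, $\bigcap_{\p \in \Min(R)} \p = 0$, so $u - u_0 = 0$ and $u = u_0 \in R_0$, completing the argument.

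The essential point — and the only place reducedness enters — is the passage from the irreducible to the reduced case: for a single graded domain the conclusion is immediate, but in general one must intersect the conditions coming from all minimal primes, which is exactly what reducedness permits. I expect the main obstacle to be verifying that this gluing is legitimate, namely that minimal primes of the graded ring $R$ are homogeneous and that reducedness yields $\bigcap_\p \p = 0$. The degree-$0$ triviality of units is then automatic precisely because the base field is $\mathbb{F}_2$, whose unit group is trivial; over a larger field $k$ the same argument would only give $R^\times = k^\times$.
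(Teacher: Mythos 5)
Your proof is correct and takes essentially the same route as the paper: both decompose the radical homogeneous ideal into homogeneous (minimal) primes, use that a positively graded domain has units only in degree $0$, and glue via reducedness --- your step $u - u_0 \in \bigcap_{\p \in \Min(R)} \p = 0$ is exactly the paper's embedding $R \hookrightarrow \prod_i S/p_i$ restricted to unit groups. The only cosmetic difference is that the paper cites \Cref{gradedCase} (surjectivity of $\{1\} = S^\times \twoheadrightarrow (S/p_i)^\times$) where you inline the underlying degree-comparison argument directly.
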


\begin{proof}
Let $S = \mathbb{F}_2[x_0, \ldots, x_n] = \Gamma_*(\mathbb{P}^n_{\mathbb{F}_2}, 
\mathcal{O}_{\mathbb{P}^n_{\mathbb{F}_2}})$ and 
$R = \mathbb{F}_2[x_0, \ldots, x_n]/I$, where $I$ is a homogeneous radical 
ideal. Then $I = p_1 \cap \ldots \cap p_m$, where $p_i$ are homogeneous 
primes in $S$, so $R \hookrightarrow S/p_1 \times \ldots \times S/p_m$. 
Thus $R^\times \subseteq \prod_{i=1}^m (S/p_i)^\times$, so it suffices to 
show $(S/p_i)^\times = \{1\}$ for each $i$. Now $S$ is a polynomial 
ring over $\mathbb{F}_2$, so $S^\times = (\mathbb{F}_2)^\times = \{1\}$,
and each $p_i \subseteq S_+$, so by \Cref{gradedCase} there is a surjection 
$\{1\} = S^\times \twoheadrightarrow (S/p_i)^\times$.
\end{proof}

In fact, the above reasoning holds in any number of variables. Thus, if 
$R = \Z[x_1, \ldots]/I$ is any ring presented as a $\Z$-algebra, then 
homogenizing the defining ideal $I$ with a new variable $x_0$ gives a 
standard graded ring $\widetilde{R} := \Z[x_0, x_1, \ldots]/\widetilde{I}$, and then 
$(\widetilde{R} \otimes_\Z \mathbb{F}_2)_{\text{red}} = 
\mathbb{F}_2[x_0, x_1, \ldots]/\sqrt{\widetilde{I}}$ has trivial unit group. 

Conversely, every ring with trivial unit group has characteristic 2 (as $1 = -1$)
and has trivial Jacobson radical (in particular, is reduced). Thus if $R^\times = \{1\}$,
then $R$ is the (affine) coordinate ring of a reduced scheme over $\mathbb{F}_2$,
and \Cref{trivialUnits} realizes every (standard) graded ring with trivial unit group.


Finally, one possible generalization is to consider other functors from
\textbf{CRing} to \textbf{Grp}. A natural choice which directly generalizes 
the group of units functor is $GL_n(\_) : \textbf{CRing} \to \textbf{Grp}$, 
which for $n = 1$ coincides with $(\_)^\times$. In order to treat the case
of $GL_n$, it is necessary to consider noncommutative rings, and nonabelian 
groups.

It is also possible to define property $(*)$ for 
two-sided ideals in a noncommutative ring. It turns out that the key place 
where commutativity was used in Section 1 was to describe the preimage of 
the units of $R/I$ as a saturation $(1 + I)^\sim$. To be precise, let us make
the following definition:

\begin{mydef}
Let $R$ be an arbitrary (possibly noncommutative) ring, and $W \subseteq R$.
Define the saturation of $W$ as
\[W^\sim := \{x \in R \mid \exists y \in R: xy, yx \in W\}\]
\end{mydef}

When $R$ is commutative, this reduces to the previous definition of $\sim$,
and one can check that $\sim$ is still a closure operation: e.g. $W \subseteq W^\sim$
follows from existence of a $1$, and idempotence (i.e. $W^\sim = (W^\sim)^\sim$)
follows from associativity of multiplication (note: if the condition ``either $xy \in W$ or 
$yx \in W$" was used instead, then $\sim$ would no longer be idempotent).

The definition of $\sim$ above agrees well with units, since units are by definition 
two-sided. For example, $\{1\}^\sim = R^\times$, and the exact statement of 
\Cref{firstChar} goes through without change. 

However,
this turns out to be unnecessary for $GL_n$, because of the fact that in the 
matrix ring, $AB = 1$ iff $BA = 1$. In other words, the definition for $\sim$ above 
works well in a Dedekind-finite ring (i.e. $xy = 1 \iff yx = 1$ for all $x, y \in R$).

\begin{prop} \label{GLn}
Let $R$ be a ring, $I \subseteq \rad(R)$ an $R$-ideal, and 
$p : R \twoheadrightarrow R/I$ the canonical surjection. Then for any 
$n \in \N$, $\overline{p} : GL_n(R) \to GL_n(R/I)$ is surjective.
\end{prop}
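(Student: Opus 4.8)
The plan is to lift an element $A \in GL_n(R/I)$ entrywise to a matrix $\widetilde{A} \in M_n(R)$, show that $\widetilde{A}$ is already invertible over $R$ because $I \subseteq \rad(R)$, and conclude that $\widetilde{A} \in GL_n(R)$ maps to $A$. The key observation is that applying $p$ entrywise to matrices gives a surjective ring homomorphism $M_n(R) \twoheadrightarrow M_n(R/I)$, and crucially this reduces the problem for $GL_n$ to the $n=1$ case of unit groups, but now for the (noncommutative) matrix rings $M_n(R)$ and $M_n(R/I)$. So the first step is to recognize that $GL_n(R) = M_n(R)^\times$ and $GL_n(R/I) = M_n(R/I)^\times$, and that $\overline{p}$ is exactly the induced map on unit groups of the surjection $M_n(R) \twoheadrightarrow M_n(R/I)$ whose kernel is $M_n(I)$.

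Given this reduction, the heart of the matter is the ideal-membership fact that $M_n(I) \subseteq \rad(M_n(R))$ whenever $I \subseteq \rad(R)$. Once this is established, \Cref{radCase} (or rather its noncommutative analogue, the observation that an ideal inside the Jacobson radical has $(*)$) finishes the argument: a preimage of a unit of $M_n(R/I)$ differs from an actual unit of $M_n(R)$ by an element of $M_n(I) \subseteq \rad(M_n(R))$, and adding a radical element to a unit keeps it a unit. Concretely, if $\widetilde{A}$ lifts $A$, then $\widetilde{A}$ is invertible modulo $\rad(M_n(R))$, and lifting idempotents/units across the Jacobson radical shows $\widetilde{A}$ is invertible. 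Since $R$ is Dedekind-finite-friendly here (matrix rings satisfy $AB=1 \iff BA=1$ over commutative $R$, and more generally the needed one-sided-to-two-sided passage holds modulo the radical), no subtlety about left versus right inverses obstructs us.

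**The main obstacle** I expect is proving cleanly that $M_n(I) \subseteq \rad(M_n(R))$ (indeed the standard fact $\rad(M_n(R)) = M_n(\rad(R))$). One efficient route is the characterization $\rad(M_n(R)) = \{M \in M_n(R) \mid 1 + XMY \in M_n(R)^\times \text{ for all } X,Y\}$, but the most self-contained approach uses that the Jacobson radical of $M_n(R)$ is the intersection of annihilators of simple left modules, and simple left $M_n(R)$-modules correspond (via Morita equivalence) to simple left $R$-modules, so a matrix lies in $\rad(M_n(R))$ iff all its entries annihilate every simple $R$-module, i.e. iff all entries lie in $\rad(R)$. I would cite this as the standard computation $\rad(M_n(R)) = M_n(\rad(R))$ rather than reprove it. With that in hand, the chain $I \subseteq \rad(R) \implies M_n(I) \subseteq M_n(\rad(R)) = \rad(M_n(R))$ is immediate, and the proof reduces to applying the (noncommutative-compatible) version of \Cref{radCase} to the surjection $M_n(R) \twoheadrightarrow M_n(R/I)$ with kernel contained in $\rad(M_n(R))$.
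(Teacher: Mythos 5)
Your proof is correct, but it takes a genuinely different route from the paper's. The paper's argument is the determinant trick: lift $B \in GL_n(R/I)$ entrywise to $A \in M_n(R)$, observe that $\det A$ is a polynomial in the entries so $p(\det A) = \det B \in (R/I)^\times$, and conclude that $\det A \in R^\times$ (any element that is a unit modulo an ideal contained in $\rad(R)$ is a unit), hence $A \in GL_n(R)$. You instead identify $GL_n(R) = M_n(R)^\times$ and reduce to the $n = 1$ statement for the surjection $M_n(R) \twoheadrightarrow M_n(R/I)$ with kernel $M_n(I)$, the key lemma being the standard fact $\rad(M_n(R)) = M_n(\rad(R))$ (so the kernel lies in the radical of the matrix ring, and the noncommutative analogue of \Cref{radCase} applies). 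Both arguments are valid, and both establish the stronger fact the paper remarks on afterwards: \emph{any} entrywise lift of an invertible matrix is already invertible. What each buys: the determinant argument is completely elementary and self-contained, but it uses commutativity of $R$ in an essential way; your radical computation costs an external citation (e.g.\ Lam), but it works verbatim over noncommutative base rings, which aligns with the paper's closing discussion of extending $(*)$ to the noncommutative setting. One small cleanup: your appeals to Dedekind-finiteness and to ``lifting idempotents'' are unnecessary --- in any ring, an element that is a unit modulo an ideal contained in the Jacobson radical is a two-sided unit (a one-sided inverse mod the radical upgrades directly, since $xy = 1 - j$ with $j \in \rad(S)$ makes $xy$ a unit, and symmetrically for $yx$), so no left-versus-right subtlety arises in the first place.
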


\begin{proof}
Pick $B = (b_{ij}) \in GL_n(R/I)$, and let $A = (a_{ij}) \in M_n(R)$ be any 
(entrywise) lift of $B$ to $R$, i.e. $p(a_{ij}) = b_{ij}$ for all $i, j$. 
Since $\det A$ is a polynomial in the entries of $A$, $p(\det A) = 
\det B$ is a unit in $R/I$. But $I \subseteq \rad(R)$, so $\det A$ is in 
fact a unit in $R$, i.e. $A \in GL_n(R)$.
\end{proof}

Notice that the proof of \Cref{GLn} shows a stronger fact than 
preserving surjectivity; namely, any lift of a matrix in $GL_n(R/I)$ is 
already in $GL_n(R)$. In fact, the analogues of \Cref{radCase} and 
\Cref{reduceToRad} hold for $GL_n(\_)$ as well, and show that \Cref{GLn}
holds for semilocal rings as well.



\vskip 4ex

\end{document}